\newtheorem{thm}{Theorem}[section]
\newtheorem{conj}[thm]{Conjecture}
\newtheorem{lem}[thm]{Lemma}
\newtheorem{defn}[thm]{Definition}
\numberwithin{equation}{section}
\newtheorem{cor}[thm]{Corollary}
\DeclareMathOperator\diam{diam}
\DeclareMathOperator\dist{dist}
\def\ubdim{\overline{\dim}_B}
\def\lbdim{\underline{\dim}_B}
\def\hdim{\dim_H}
\newcommand\N{\Bbb N}
\newcommand\Z{\Bbb Z}
\newcommand\Q{\Bbb Q}
\newcommand\R{\Bbb R}
\newcommand\T{\Bbb T}
\def\sm{[\!]}
\begin{document}

\title{Affine embeddings of Cantor sets and  dimension of $\alpha\beta$-sets}

\author{De-Jun Feng}
\address[De-Jun Feng]{Department of Mathematics, The Chinese University of
    Hong Kong, Shatin, Hong Kong}
\email{djfeng@math.cuhk.edu.hk}

\author{Ying Xiong}
\address[Ying Xiong]{Department of Mathematics, South China University of
    Technology, Guangzhou, 510641, P.~R.~China}
\email{xiongyng@gmail.com}

\subjclass[2000]{28A80, 37C45, 11B30}
\keywords{affine embedding, Cantor sets, self-similar sets, $\alpha\beta$-sets.}

\begin{abstract}
  Let $E, F\subset \R^d$ be two self-similar sets, and suppose that $F$ can be affinely embedded into $E$.  Under the assumption that $E$ is dust-like and has a small Hausdorff dimension, we prove the logarithmic commensurability between the contraction ratios of $E$ and $F$. This gives a partial affirmative answer to Conjecture 1.2 in \cite{FHR14}. The proof  is based on our study of the box-counting dimension of a class of multi-rotation invariant sets on the unit circle, including the $\alpha\beta$-sets initially studied by Engelking and  Katznelson.
\end{abstract}

\maketitle

\section{Introduction}\label{sec:intro}

For $A,B\subset \R^d$, we say that $A$ can be {\it affinely embedded} into
$B$ if $f(A)\subset B$ for some affine map $f\colon \R^d\to \R^d$ of the form
$f(x)=Mx+a$, where $M$ is an invertible $d\times d$ matrices and $a\in \R^d$.
In this paper, we investigate the necessary conditions under which one
self-similar set can be  affinely embedded into another self-similar  set.

Before formulating our result, we
first recall some terminologies about self-similar sets. Let
$\Phi=\{\phi_i\}_{i=1}^\ell$ be an {\it  iterated function system} (IFS) on
$\R^d$, that is,  a finite family of contractive mappings on $\R^d$. It is well known (cf. \cite{Hutch81}) that  there is a unique
non-empty compact set $K\subset \R^d$, called the {\it attractor} of $\Phi$,
such that 
\[K=\bigcup_{i=1}^\ell \phi_i(K).\] 
Correspondingly, $\Phi$ is called a {\it generating IFS} of $K$. We say that
$\Phi$ satisfies the {\it open set condition} (OSC)  if there exists a non-empty
bounded open set $V\subset \R^d$ such that $\phi_i(V)$, $1\leq i\leq \ell$,
are pairwise disjoint subsets of $V$. Similarly, we say that $\Phi$ satisfies
the {\it strong separation condition} (SSC) if $\phi_i(K)$ are pairwise disjoint
subsets of $K$. The strong separation condition  always implies the open set
condition (\cite{Hutch81}). When all maps in an IFS $\Phi$ are similitudes,
the attractor $K$ of $\Phi$ is called a {\it self-similar} set. By a similitude we mean  a map $\phi:\R^d\to \R^d$ of the form $\phi(x)=\rho P x+a$, with $\rho>0$, $a\in \R^d$ and $P$ an $d\times d$ orthogonal matrix. A self-similar set is called {\it nontrivial} if it is not a singleton.

The problem of determining whether one self-similar set can be affinely
embedded into another self-similar set was first studied in
\cite{FHR14}, revealing some interesting connections to smooth embeddings
and intersections of Cantors sets. It was shown \cite{FHR14} that, under
the open set condition, \footnote{Here we say that a self-similar set
satisfies the open set condition if it has a generating IFS which satisfies
this condition.} one nontrivial self-similar set $F$ can be embedded into another
self-similar set $E$ under a $C^1$-diffeomorphism if and only if it can be
affinely embedded into $E$; moreover, if $F$ can not be affinely embedded
into $E$, then there is a dimension drop in the intersection of $E$ and any
$C^1$-image of $F$ in the sense that
\[\hdim (E\cap f(F))<\min\{\hdim E,\;\hdim F\},\] 
where $f$ is any $C^1$-diffeomorphism on $\R^d$, and $\dim_H$ stands for Hausdorff dimension (cf. \cite{Fal-book, Mat-book}).

The above affine embedding problem is also closely related to  other investigations
on self-similar sets and measures, including classifications of self-similar subsets of Cantor sets \cite{FeRaW15},   structures of generating IFSs of Cantor sets~\cite{FeWa09, DeLa13, DeLa16},  Hausdorff dimension of
intersections of Cantor sets~\cite{ElKeM10,Furst70},  Lipschitz equivalence and Lipschitz
embedding of Cantor sets \cite{FalMa92, DWXX11},    geometric
rigidity of $\times m$-invariant measures~\cite{Hochm12}, and equidistribution from fractal measures \cite{HoSh15}.

It is natural to expect that, if one nontrivial  self-similar set can be affinely embedded into another  self-similar set which is totally disconnected, then the contraction ratios of these two sets should satisfy certain arithmetic relations. The following conjecture has been formulated from this view point.

\begin{conj}[\cite{FHR14}]
\label{conj-1}

  Suppose that $E,F$ are two totally disconnected nontrivial self-similar sets  in $\R^d$, generated by IFSs $\Phi=\{\phi_i\}_{i=1}^\ell$ and $\Psi=\{\psi_j\}_{j=1}^m$ respectively. Let $\rho_i,\gamma_j$ denote the contraction ratios of $\phi_i$ and $\psi_j$. Suppose that  $F$ can be affinely
  embedded into $E$. Then for each $1\le j\le m$, there exist non-negative
  rational numbers $t_{i,j}$ such that
  $\gamma_j=\prod_{i=1}^\ell\rho_i^{t_{i,j}}$. In particular, if $\rho_i=\rho$ for all $1\leq i\leq \ell$, then $\log \gamma_j/\log \rho\in \Q$ for $1\leq j\leq m$.
\end{conj}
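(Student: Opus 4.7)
The plan is to leverage the self-similar structures of $E$ and $F$ at all scales in order to extract a multiplicative relation between the $\rho_i$ and the $\gamma_j$ from the embedding. After standard reductions—using that OSC and SSC are interchangeable for this problem (cf.\ \cite{FHR14}) and replacing each IFS by a sufficiently deep iterate—I would assume that both $\Phi$ and $\Psi$ satisfy the strong separation condition and that $f(F)$ lies well inside a single cylinder of $\Phi$. In dimension one the linear part of $f$ is automatically a signed similitude; in higher dimensions one additionally needs that this linear part is conformal, which can be deduced from total disconnectedness via the structure of tangent sets of $E$.

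The core construction is a synchronized iteration of the two IFSs. For each word $J = j_1\cdots j_n\in\{1,\dots,m\}^n$, the set $f\circ\psi_J(F)$ is a similar copy of $f(F)$ inside $E$ with contraction ratio $\gamma_J := \gamma_{j_1}\cdots\gamma_{j_n}$. For each such $J$, I would choose a word $I = I(J)$ in $\{1,\dots,\ell\}^*$ so that $\phi_I(E)$ is the smallest cylinder of $\Phi$ containing a fixed reference point of $f\circ\psi_J(F)$; by the separation conditions, $\rho_I := \rho_{i_1}\cdots\rho_{i_{|I|}}$ is then comparable to $\gamma_J$ up to a uniform constant. The map $\phi_I^{-1}\circ f\circ\psi_J$ is a similitude of ratio $\lambda_J := c\,\gamma_J/\rho_I$ (for a fixed $c>0$) sending $F$ into $E$, with the $\lambda_J$ confined to a bounded interval in $(0,\infty)$.

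Next I would analyze the accumulation set $\Lambda := \overline{\{\log\lambda_J : J\in\{1,\dots,m\}^*\}}$ on $\R$. Appending a letter $j$ to $J$ and then re-optimizing the cylinder word shows that $\Lambda$ is invariant, up to closure, under translation by each $\log\gamma_j$ and each $\log\rho_i$, hence under the additive group $G$ that they generate. If the commensurability $\log\gamma_j = \sum_i t_{i,j}\log\rho_i$ with $t_{i,j}\in\Q_{\ge 0}$ were to fail for some $j$, then $G$ would be dense in $\R$ (the non-negativity of $t_{i,j}$ being automatic from the signs of the logarithms once rationality is established), and $\Lambda$ would project to a closed subset of some circle $\R/c\Z$ invariant under two rationally independent rotations—an $\alpha\beta$-set in the sense of Engelking and Katznelson. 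Since each $\lambda_J\in\Lambda$ records an embedded scaled copy of $F$ inside $E$, a lower bound on $\ubdim\Lambda$ should translate, via a packing argument that places disjoint rescaled copies of $F$ inside $E$, into a lower bound on $\hdim E$, yielding a contradiction with the known dimension of $E$.

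The main obstacle is precisely this final dimension transfer, and it is where the full conjecture resists current techniques. To conclude in full generality one needs that every infinite closed $\alpha\beta$-invariant set on the circle, with rationally independent $\alpha,\beta$, has box-counting dimension equal to $1$—a quantitative strengthening of Furstenberg's topological dichotomy that is not presently available. The strategy above therefore naturally splits according to how much is known about the box-counting dimension of $\alpha\beta$-sets; developing that theory is in fact the main novel technical ingredient announced in the abstract, and a complete solution to it would deliver the full conjecture via the outline above.
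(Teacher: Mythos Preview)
The statement you are addressing is Conjecture~\ref{conj-1}, which the paper does \emph{not} prove; it establishes only the partial result Theorem~\ref{thm-2} under the extra hypotheses that $\Phi$ satisfies the SSC and $\hdim E<c$ for an explicit $c\le 1/4$. Your proposal is, appropriately, a strategic outline rather than a proof, and you correctly isolate the obstruction: one would need every $(\alpha_1,\dots,\alpha_\ell)$-set to have lower box-counting dimension $1$, whereas the paper's Theorem~\ref{thm-5} only delivers $1/2$ for $\ell=2$ and $1/(r+1)$ in general. That gap is exactly why only Theorem~\ref{thm-2} is obtained.

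Your outline is close in spirit to the paper's argument for Theorem~\ref{thm-2}, but differs in ways that matter. First, the paper does not reduce $M$ to a conformal map in $\R^d$; your claimed deduction of conformality from total disconnectedness is unjustified. Instead the paper keeps a general invertible $M$, passes to norms of vectors in $E-E$, and handles the orthogonal parts explicitly via Lemma~\ref{lem-orth}, splitting into two cases according to whether the sequence $(|MP_1^j v|)_{j\ge 0}$ is constant; the non-constant case yields $\hdim E\ge 1/4$ directly. Second, rather than iterating over all words $J\in\{1,\dots,m\}^*$ and re-optimizing a $\Phi$-cylinder (which does not produce a clean translation-invariance of your set $\Lambda$), the paper fixes the single map $\psi_1$, takes the $\Phi$-coding $i_1i_2\cdots$ of the image of its fixed point, and for each $n$ chooses the minimal $s_n$ with $M(\psi_1^{s_n}(F))+b\subset\phi_{i_1\dots i_n}(E)$. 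This yields a genuine $(\alpha_1,\dots,\alpha_\ell)$-orbit $x_n\equiv\sum_{k\le n}\alpha_{i_k}\pmod 1$ with $\alpha_i=-\log\rho_i/\log\gamma_1$, to which Corollary~\ref{cor-1} applies. Third, the dimension transfer is not a packing of copies of $F$ into $E$ but the chain
\[
\ubdim X(\omega)\le \ubdim\{|x_1-x_2|:x_1,x_2\in E\}\le \ubdim(E-E)\le 2\hdim E,
\]
and this factor of $2$ is precisely the origin of the threshold $c\le 1/4$. So your broad picture is right, but the mechanisms the paper actually uses---fixed $\psi_1$-iteration along a single $\Phi$-coding, norms in $E-E$, and a case split on the orthogonal dynamics---are what make the partial result go through, and they avoid the unproved conformality step in your sketch.
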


We remark that the above arithmetic relations on $\rho_i, \gamma_j$ do fulfil when $E$ and $F$ are dust-like (i.e., $\Phi$ and $\Psi$ satisfy the SSC) and  Lipschitz equivalent \cite{FalMa92}. Nevertheless, no arithmetic conditions are needed for the Lipschitz embeddings. Indeed, it was shown in \cite{DWXX11} that if $E,F$ are dust-like with $\dim_HF<\dim_HE$, then $F$ can be Lipschitz embedded into $E$.

So far  Conjecture  \ref{conj-1} has been considered in \cite{FHR14, Alg16, Shm16, Wu16} in the special case that  $\Phi$ is homogeneous, that is, $\rho_i=\rho$ for all $i$.
  It was proved in \cite{FHR14} that the conjecture  is true under the additional assumptions that $\Phi$ is homogeneous satisfying  the SSC and $\dim_HE<1/2$.
Recently,  Algom \cite{Alg16} showed that in the case that $d=1$,  the conjecture holds  under the SSC and homogeneity  on $\Phi$,  the OSC on $\Psi$  and an additional assumption that $\dim_HE-\dim_HF<\delta$, where $\delta$ is a positive constant depending on $\dim_HF$.  Very recently, Shmerkin \cite{Shm16} and Wu \cite{Wu16} independently obtained much sharper result  in the  case that $d=1$.  Shmerkin \cite{Shm16} proved that   Conjecture \ref{conj-1} holds  under the assumptions that $d=1$,   $\Phi$ is homogeneous satisfying the OSC and $\dim_HE<1$. Wu \cite{Wu16} proved the conjecture under almost the same assumptions, except for putting  the SSC on $\Phi$ instead of the OSC.

In this paper we consider the  general case that $\Phi$ might not be homogeneous. Let $\Q$ denote the set of rational numbers. For $u_1,\ldots, u_k\in \R$, set
\begin{equation*}
\label{e-uu}
\mbox{span}_\Q(u_1,\ldots, u_k)=\left\{\sum_{i=1}^k t_iu_i:\; t_i\in \Q\right\}.
\end{equation*}
Then $\mbox{span}_\Q(u_1,\ldots, u_k)$ is  a linear space over the field $\Q$ with dimension $\leq k$.

 Our main result is the following.

\begin{thm}\label{thm-2}
 Under the assumptions of Conjecture \ref{conj-1}, suppose in addition that $\Phi$ satisfies the SSC and $\hdim E<c$, where
  \begin{equation}
  \label{e-1.1}
  c=\left\{
  \begin{array}{ll}
   1/4, & \mbox{ if }\ell=2,\\
  1/4,    & \mbox{ if } \ell\geq 3, \; \lambda= 1,\\
  1/(2\lambda+2), & \mbox{ if } \ell\geq 3, \; \lambda> 1,
  \end{array}
  \right.
   \end{equation}
with $\lambda=\dim \mbox{span}_\Q(\log \rho_1,\ldots, \log \rho_\ell)$. Then
the conclusion of Conjecture \ref{conj-1} holds.
\end{thm}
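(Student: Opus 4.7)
The strategy is to argue by contradiction: assume that for some index $j_0$, $\log\gamma_{j_0}\notin\mathrm{span}_\Q(\log\rho_1,\ldots,\log\rho_\ell)$, and deduce a lower bound on $\hdim E$ that violates \eqref{e-1.1}. Write the affine embedding as $f(x)=Mx+a$ with $f(F)\subset E$. For every finite word $J=j_1\cdots j_n$ in the alphabet of $\Psi$ we obtain a shrunken affine copy $f\circ\psi_J(F)\subset E$ of diameter comparable to $\gamma_J:=\prod_k \gamma_{j_k}$, up to the fixed factor $\|M\|$. By the SSC on $\Phi$, there is an essentially unique cylinder $\phi_{I(J)}(E)$ of comparable scale containing a positive proportion of $f\psi_J(F)$; after renormalizing by $\phi_{I(J)}^{-1}$, we recover an affine embedding of a positive-mass portion of $F$ into $E$ with scaling ratio $t_J:=\gamma_J/\rho_{I(J)}$ bounded away from $0$ and $\infty$.

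The crux of the proof is to analyze the set of ratios $\{\log t_J\}_J$ in a suitable quotient. In log-scale, $\log t_J$ is a $\Z$-linear combination of the $\log\gamma_j$ and the $\log\rho_i$; appending a letter to $J$ translates $\log t_J$ by $\log\gamma_{j_{n+1}}$ modulo a correction by some $\log\rho_i$. Quotienting by an appropriate discrete subgroup, the orbit closure of $\{\log t_J\}$ becomes a nonempty closed set $X$ in a compact abelian group, jointly invariant under the rotations given by the images of all $\log\rho_i$ and $\log\gamma_j$. Under the assumption $\log\gamma_{j_0}\notin\mathrm{span}_\Q(\log\rho_i)$, these rotations span a $\Q$-vector space of dimension $\lambda+1$, strictly larger than what the conjecture's arithmetic conclusion would permit. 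Applying the paper's main quantitative result on multi-rotation invariant sets (the higher-rank generalizations of the Engelking--Katznelson $\alpha\beta$-sets highlighted in the abstract), $X$ inherits a sharp lower bound on $\ubdim X$ of order $\lambda/(\lambda+1)$ when $\lambda\ge 2$, and of order $1/2$ in the boundary cases $\ell=2$ or $\lambda=1$.

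The last step transfers this bound on $\ubdim X$ to a bound on $\hdim E$. Each ratio $t_J$ specifies the position and scale of an affine copy of $F$ inside a single cylinder of $E$; by the SSC on $\Phi$, distinct $t_J$ force these copies to be essentially disjoint at the matching scale. A two-scale covering argument, combining the natural self-similar mass on $F$ with a packing estimate counting how many distinct values $t_J$ are realized at each dyadic scale, then yields an inequality relating $\hdim E$, $\hdim F$, and $\ubdim X$ that, combined with the dimension bound from the previous paragraph, gives $\hdim E\ge c$, contradicting the hypothesis. The principal obstacle is the sharp box-dimension lower bound for $X$: a priori $X$ need not be self-similar, and one must extract its multi-rotation invariance from the iterative action on ratios through a careful diophantine-combinatorial argument --- this is where the dependence of $c$ on $\lambda$ truly originates. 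A secondary difficulty is arranging the dimension transfer so that enough of $\ubdim X$ survives to reach exactly the threshold $1/(2\lambda+2)$ in \eqref{e-1.1}, with the improved value $1/4$ in the special cases $\ell=2$ and $\lambda=1$ reflecting extra rigidity available in those regimes.
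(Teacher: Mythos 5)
Your proposal correctly identifies the overall architecture — reduce the embedding to a multi-rotation invariant set on $\T$ and apply the box-dimension lower bound of Theorem~\ref{thm-5} — but the details diverge in ways that leave genuine gaps. The most basic is that your contradiction hypothesis is too weak: you assume $\log\gamma_{j_0}\notin\mathrm{span}_\Q(\log\rho_1,\ldots,\log\rho_\ell)$, so a contradiction would only put $\log\gamma_{j_0}$ in the rational span, whereas the conjecture requires \emph{non-negative} rational exponents $t_{i,j}$. The paper sidesteps this by normalizing to $\alpha_i:=-\log\rho_i/\log\gamma_1$, building an $(\alpha_1,\ldots,\alpha_\ell)$-orbit $X(\omega)$ from the symbolic coding $i_1i_2\cdots$ in $E$ of the single point $y=M(x)+b$ ($x$ the fixed point of $\psi_1$), and showing this orbit is infinite under $\Q_+$-independence mod~$1$; $\Q_+$-dependence, once forced, yields the non-negative exponents directly because the orbit increments are non-negative integer combinations of the $\alpha_i$. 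Relatedly, the dimension transfer is also different: the paper does not run a two-scale covering against a mass on $F$, but records the distances $\rho_{i_1\dots i_n}^{-1}\gamma_1^{s_n}|MP_1^{s_n}v|$ as a subset of $\{|x_1-x_2|:x_1,x_2\in E\}$, pushes this through the piecewise-Lipschitz map $t\mapsto\log t/\log\gamma_1\pmod1$, and uses $\ubdim(E-E)\le 2\hdim E$ — that factor of $2$ is what produces the $2\lambda+2$. The intermediate bound you quote ($\ubdim X\gtrsim\lambda/(\lambda+1)$) also does not match Theorem~\ref{thm-5}, which gives $\lbdim\ge 1/(r+1)$.

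The second genuine gap concerns the orthogonal parts, which your sketch ignores. In $\R^d$ with $d\ge2$ the similarity $\psi_1$ has a rotation part $P_1$, and the length $|MP_1^kv|$ of a transported difference vector need not be independent of $k$. The paper splits into two cases: if $(|MP_1^kv|)_k$ is constant, the orbit argument above applies; if not, Lemma~\ref{lem-orth} (after passing to a suitable power of $\psi_1$) makes the closure of $\{P_1^k\}$ a \emph{connected} subgroup of $\mathcal O(d)$, and then the realized ratio set $\{|MP_1gv|/|Mgv|:g\in W\}$ is a nondegenerate interval inside $\pi_1(U^*\times U^*)$, forcing $\hdim E\ge1/4$ outright. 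Without an analogue of this case analysis your argument fails for $d\ge2$. Finally, your construction tracks all $\Psi$-words $J$ and associates to each a cylinder $\phi_{I(J)}(E)$ "containing a positive proportion" of $f\psi_J(F)$; this choice is not canonical and the "essentially disjoint" claim for distinct $t_J$ is not justified, whereas the paper's use of only $\psi_1^k$ together with the $s_n$'s defined by \emph{containment} in the fixed cylinders $\phi_{i_1\dots i_n}(E)$ (made possible by the SSC via \eqref{e-3.5}) avoids both issues and yields a bona fide $(\alpha_1,\ldots,\alpha_\ell)$-orbit on which Corollary~\ref{cor-1} can be applied.
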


The proof of Theorem \ref{thm-2} is based on our study  of the box counting
dimension  of certain multi-rotation invariant sets on the unit circle.  To
be more precise, we first introduce some notation and definitions.  Let
$\T=\R/\Z$ denote the unit circle (which can be viewed as the unit interval
$[0,1]$ with the endpoints being identified). Let $\pi:\R\to \T$ be the
canonical mapping defined by $x\mapsto \{x\}$, where $\{x\}$ stands for the
fractional part of $x$.

\begin{defn}
\label{defn-1}
Let $\alpha_1,\dots,\alpha_\ell\in \R$ with $\ell\geq 2$. A non-empty closed set $K\subset \T$ is called an $(\alpha_1,\ldots, \alpha_\ell)$-set
if
\[
K\subset \bigcup_{i=1}^\ell (K-\pi(\alpha_i))
\]
equivalently if, whenever $x\in K$, then there exists $i\in \{1,\ldots,
\ell\}$ so that $x+\pi(\alpha_i)\in K$. Moreover, a sequence $(x_n)_{n=0}^\infty$ of points in $\T$ is called an $(\alpha_1,\ldots, \alpha_\ell)$-orbit if $$x_{n+1}-x_n\in \{\pi(\alpha_1),\ldots, \pi(\alpha_\ell)\}$$  for all $n\geq 0$.
\end{defn}

\begin{defn}
\label{defn-2}
Let $\alpha_1,\ldots, \alpha_\ell\in \R$ with $\ell\geq 1$. Say that
$\alpha_1,\ldots,\alpha_\ell$ are $\Q_+$-independent $(\mbox{mod } 1)$ if the
following equation
\[
 t_1\alpha_1+\ldots+t_\ell \alpha_\ell\equiv 0 \quad (\mbox{\rm mod }1)
\]
in the variables $t_1,\ldots, t_\ell$ has a unique solution $(0,\ldots, 0)$
in $\Q_+^\ell$, where $\Q_+$ stands for the set of non-negative rational
numbers.
\end{defn}

Similarly we can define $\Q$-independence $(\mbox{mod } 1)$ via replacing $\Q_+$ by $\Q$ in Definition~\ref{defn-2}. It is clear that the $\Q$-independence $(\mbox{mod } 1)$ implies
the $\Q_+$-independence $(\mbox{mod } 1)$.

The study of $(\alpha_1,\ldots,\alpha_\ell)$-sets has its origin in the early works of Engelking and Katznelson \cite{Eng61, Kat79}. In 1961,  Engelking \cite{Eng61} raised the question of existence of nowhere dense $(\alpha, \beta)$-sets (for short, $\alpha\beta$-sets), where $\alpha,\beta$ are $\Q$-independence $(\mbox{mod } 1)$.  Finally in 1979,  Katznelson \cite{Kat79} gave an affirmative answer to this question.  He showed that for any such pair $(\alpha,\beta)$, there always exist nowhere dense $\alpha\beta$-sets;  furthermore  for certain special  pairs $(\alpha, \beta)$, there  exist $\alpha\beta$-sets of  Hausdorff dimension $0$.

In contrast to Katznelson's result, we prove the following result  claiming that, any  $(\alpha_1,\ldots, \alpha_\ell)$-orbit passing through  infinitely many points has a large lower box-counting dimension (cf. \cite{Fal-book, Mat-book} for the definition).
\begin{thm}
\label{thm-5}
Let $\alpha_1,\ldots, \alpha_\ell\in \R$ with $\ell\geq 2$.  Suppose that  $(x_n)_{n=0}^\infty$ is an $(\alpha_1,\ldots, \alpha_\ell\}$-orbit passing through  infinitely many points. Let $K$ be the closure of the set $\{x_n:\; n\geq 0\}$.   Then the following statements hold.
\begin{itemize}
\item[(i)] If $\ell=2$, then either $K-K=\T$ or $K$ has a non-empty interior; in particular, $$\underline{\dim}_BK\geq 1/2,$$ where  $\underline{\dim}_B$ stands for lower box-counting dimension.
\item[(ii)] If $\ell\geq 2$, then $$\underline{\dim}_BK\geq
\left\{
\begin{array}{ll}
1,& \mbox{ if }r=1,\\
1/(r+1), & \mbox{ if }r>1,
\end{array}
\right.
$$
 where $r=\dim \mbox{span}_\Q(1,\alpha_1,\ldots,\alpha_\ell)-1$.
\end{itemize}
 \end{thm}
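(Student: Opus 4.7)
The plan is to reduce each dimension lower bound to a covering statement about finite Minkowski sums of $K$. From the elementary inequality $N(A_1 + \cdots + A_s, \delta) \leq \prod_i N(A_i, \delta)$, taking $\liminf$ in the definition of lower box-counting dimension yields
\[
\lbdim(\underbrace{K \pm K \pm \cdots \pm K}_{s \text{ summands}}) \leq s\,\lbdim K.
\]
Therefore, to prove $\lbdim K \geq 1/s$ it suffices to show that some $s$-fold signed Minkowski sum of $K$ equals $\T$ (equivalently, is dense in $\T$, since the sum is closed). Unlike Katznelson's examples of nowhere-dense $\alpha\beta$-sets of Hausdorff dimension $0$, an orbit carries extra symbolic structure that we shall exploit.

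For part~(i) with $\ell=2$, I would first dispose of the degenerate cases (some $\pi(\alpha_i)=0$, only one step-type used infinitely often, or both $\alpha_i$ rational), each of which either yields $K=\T$ directly or contradicts the infinite-orbit hypothesis. In the main case, both $\pi(\alpha_i)$ are nonzero, both step-types are used infinitely often, and at least one of $\pi(\alpha_1),\pi(\alpha_2)$ is irrational. The orbit differences $x_n-x_m$ take the form $p\,\pi(\alpha_1)+q\,\pi(\alpha_2)$ with $p,q\geq 0$ and $p+q=n-m$, where the realized $(p,q)$-pairs are dictated by the orbit's symbolic sequence. The dichotomy is then established by case analysis: either the realized $(p,q)$-set maps to a dense subset of $\T$ under $(p,q) \mapsto p\,\pi(\alpha_1)+q\,\pi(\alpha_2) \bmod 1$ (so $K-K=\T$), or the symbolic sequence is rigid enough to force a rational relation between $\pi(\alpha_1)$ and $\pi(\alpha_2)$; combined with the infinite-orbit hypothesis, this latter alternative forces $K$ to contain an arc.

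For part~(ii), I would fix a $\Q$-basis $1,\alpha_{j_1},\ldots,\alpha_{j_r}$ of $\mbox{span}_\Q(1,\alpha_1,\ldots,\alpha_\ell)$, so that all remaining $\alpha_i$ are $\Q$-combinations of the basis. When $r=1$, all $\alpha_i\in\Q+\Q\alpha_{j_1}$ with $\alpha_{j_1}$ irrational; after clearing denominators, pigeonhole on the bounded-denominator rational part of $x_n$ extracts an infinite orbit subsequence lying on a single coset of an irrational rotation, whose closure is $\T$, so $K=\T$ and $\lbdim K=1$. When $r\geq 2$, the target is to show that the $(r+1)$-fold signed Minkowski sum of $K$ equals $\T$: by Kronecker's theorem $\{\sum m_i\alpha_i \bmod 1 : m_i\in\Z\}$ is dense in $\T$, and each such $\Z$-combination is to be realized as a signed sum of at most $r+1$ orbit points, using orbit path-differences to furnish the non-negative contributions and iterating differences to accommodate negative coefficients. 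Subadditivity then yields $\lbdim K\geq 1/(r+1)$.

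The hard part throughout is the density step. The orbit provides only non-negative integer combinations of the $\pi(\alpha_i)$ along specific admissible symbolic paths, and adversarial symbolic sequences can sharply restrict these (for instance, an alternating sequence with $\ell=2$ forces all path-differences to satisfy $|p-q|\leq 1$). Reconciling such symbolic constraints with the required density of the realized set in $\T$---or, alternatively, showing that when density fails either the orbit must be finite (contradiction) or $K$ must contain an arc---requires intertwining pigeonhole arguments, the Kronecker-Weyl equidistribution theorem, and careful combinatorial analysis of the admissible symbolic sequences. This combinatorial-Diophantine interplay constitutes the technical crux of the proof.
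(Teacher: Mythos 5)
Your reduction via Minkowski-sum subadditivity of covering numbers is a sound starting point, and for part~(i) it matches the paper's opening move: the paper likewise deduces $\lbdim K\ge\tfrac12\lbdim(K-K)$ and then proves the dichotomy between $X-X$ dense and $\overline X$ having interior. But for part~(ii) your plan diverges from the paper and the crucial step is left unestablished.

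For part~(i), the genuinely hard subcase is $\alpha_1,\alpha_2$ $\Q_+$-independent mod~$1$ with $\alpha_2-\alpha_1\notin\Q$ and $X-X$ not dense. Your sketch gestures at ``rigidity of the symbolic sequence forcing a rational relation,'' but that is not how the argument actually closes. The paper instead introduces $\tau(n)=\#\{i\le n:\omega_i=2\}$ and shows, using only the failure of $\delta$-density of $X-X$, the clean subadditivity bound $\sup_{n,m}|\tau(n+m)-\tau(n)-\tau(m)|<N$. This forces $|\tau(n)-n\tau|\le N$ for a fixed $\tau$, so that $x_n$ sits within a bounded error of $n\alpha_1+[n\tau](\alpha_2-\alpha_1)$, and $\overline X$ is (up to a finite union of translates) the image of a closed connected subgroup of $\T^2$ under a projection along an irrational direction --- hence has interior. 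This counting-function rigidity is the missing mechanism in your case analysis.

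For part~(ii) the gap is more fundamental. Your target --- that some $(r+1)$-fold signed Minkowski sum of $K$ equals $\T$ --- is strictly stronger than the box-dimension inequality, and nothing in the proposal shows how to realize arbitrary $\Z$-combinations $\sum m_i\alpha_i$ as signed sums of at most $r+1$ orbit points; as you yourself note, the admissible symbolic paths can be highly constrained, and Kronecker density of the full $\Z$-lattice of combinations says nothing about the sublattice reachable through $r+1$ orbit points. The paper does not go through Minkowski sums at all here. It fixes a $\Q$-basis $1,\beta_1^*,\dots,\beta_r^*$ with $\alpha_i=\sum_j p_{i,j}\beta_j^*+q_i^*$, sets $\widetilde x_n\equiv\sum_j b_j(n)\beta_j^*\pmod1$ (so $x_n-\widetilde x_n$ is rational with bounded denominator), and proves the key anti-concentration Lemma~\ref{l:kxn}: there is $k_0$ with $\sup_n\|k\widetilde x_n\|\ge1/5$ for all $k\ge k_0$. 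That lemma is the engine. By Dirichlet pigeonhole one finds $k_n\le(mn)^r+1$ with $\|k_n\beta_j^*\|\le1/(mn)$, so that the rescaled orbit $y_{n,s}\equiv k_nqx_s$ has step sizes $\le q/n$; the union of $q/n$-intervals around its points is a single arc, which by the lemma has length $\ge1/5$, giving $N_{q/n}(Y_n)\gtrsim n$ and, via the scaling Lemma~\ref{simple}, a covering bound for $X$ at scale $\asymp n^{-(r+1)}$. Nothing resembling Lemma~\ref{l:kxn} --- or even a statement to play its role --- appears in your proposal.

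Finally, your $r=1$ argument also has a gap: pigeonhole produces infinitely many orbit points on a fixed rational coset of $\Z\alpha$, but a subsequence of an irrational rotation orbit need not be dense in its coset. The paper avoids this by showing that finitely many rational translates of $X$ jointly cover $\{n\beta:n\in\N\}$ (or its negative), and invoking Baire category to conclude $\overline X$ has interior, rather than trying to show $K=\T$ outright.
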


Notice that when $ \alpha_1,\ldots, \alpha_\ell$ are $\Q_+$-independent $(\mbox{mod } 1)$,   $x_n\neq x_m$ for different $n,m$ for any $(\alpha_1,\ldots, \alpha_\ell)$-orbit $(x_n)_{n=0}^\infty$.  Hence by  Theorem \ref{thm-5},  we have the following corollary,  saying that under the assumption of $\Q_+$-independence,  every $\alpha\beta$-set or more generally, every $(\alpha_1,\ldots, \alpha_\ell)$-set has a large lower box-counting dimension.

\begin{cor}
\label{cor-1}
Let $\alpha_1,\ldots, \alpha_\ell\in \R$ with $\ell\geq 2$. Assume that $\alpha_1,\ldots, \alpha_\ell$ are $\Q_+$-independent $(\mbox{mod } 1)$. Let $K\subset \T$ be an $(\alpha_1,\ldots, \alpha_\ell)$-set. Then the statements (i), (ii) listed in Theorem \ref{thm-5} hold for $K$.
 \end{cor}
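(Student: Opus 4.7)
The plan is to reduce Corollary~\ref{cor-1} to Theorem~\ref{thm-5} by extracting an injective orbit lying inside $K$. Fix any $x_0\in K$. Since $K$ is an $(\alpha_1,\ldots,\alpha_\ell)$-set, whenever $x_n\in K$ has already been defined there exists some $i\in\{1,\ldots,\ell\}$ with $x_n+\pi(\alpha_i)\in K$; setting $x_{n+1}:=x_n+\pi(\alpha_i)$ inductively produces an $(\alpha_1,\ldots,\alpha_\ell)$-orbit $(x_n)_{n\geq 0}$ entirely contained in $K$.

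The key step is that this orbit is injective, and this is where $\Q_+$-independence enters. Suppose, for contradiction, that $x_n=x_m$ for some $0\leq n<m$. Telescoping the increments $x_{k+1}-x_k$ gives
$$\sum_{i=1}^\ell k_i\,\pi(\alpha_i)=\pi(0)\quad\text{in }\T$$
for some non-negative integers $k_1,\ldots,k_\ell$ with $\sum_i k_i=m-n>0$. This says $\sum_i k_i\alpha_i\equiv 0\pmod 1$ with $(k_1,\ldots,k_\ell)\in\Q_+^\ell\setminus\{0\}$, contradicting the $\Q_+$-independence $(\bmod\,1)$ of $\alpha_1,\ldots,\alpha_\ell$. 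Hence the $x_n$ are pairwise distinct, and in particular $(x_n)_{n\geq 0}$ passes through infinitely many points.

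Now let $K':=\overline{\{x_n:n\geq 0\}}$. Since $K$ is closed and each $x_n\in K$, we have $K'\subset K$. Applying Theorem~\ref{thm-5} to the orbit $(x_n)_{n\geq 0}$ gives, in case (ii), the lower bound on $\lbdim K'$ required by the corollary; in case (i) ($\ell=2$), it gives either $K'-K'=\T$ or that $K'$ has non-empty interior. These conclusions transfer to $K$ at once: lower box dimension is monotone under inclusion, so $\lbdim K\geq \lbdim K'$; and from $K'\subset K$ we obtain both $K-K\supset K'-K'$ and that $K$ has non-empty interior whenever $K'$ does. This completes the proof.

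I anticipate essentially no obstacle here: the corollary is a clean packaging of Theorem~\ref{thm-5} under the stronger hypothesis of $\Q_+$-independence, and the only substantive observation needed is the distinctness of the extracted orbit, which is precisely what the $\Q_+$-independence is designed to guarantee.
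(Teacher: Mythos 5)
Your proof is correct and takes essentially the same route as the paper, which simply notes in the sentence preceding the corollary that $\Q_+$-independence $(\bmod\,1)$ forces every $(\alpha_1,\ldots,\alpha_\ell)$-orbit to be injective and then invokes Theorem~\ref{thm-5}; you have merely spelled out the extraction of an orbit inside $K$, the telescoping argument for injectivity, and the monotonicity step $\lbdim K\ge\lbdim K'$, all of which are the intended (and correct) details.
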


To our best  knowledge, Theorem \ref{thm-5} seems to be new. It  not only plays a key role in our proof of Theorem \ref{thm-2}, but is also interesting in its own right.

This paper is organized as follows. In Section~\ref{sec:MIR} we prove Theorem \ref{thm-5}. In Section~\ref{sec:proof} we prove Theorem \ref{thm-2}. In Section~\ref{sec:4}, we pose several questions for further study.
\section{Box-counting dimension of multi-rotation invariant sets}
\label{sec:MIR}

In this section, we prove Theorem \ref{thm-5}.  Let $\ell\in\N$, $\ell\geq 2$ and $\alpha_1,\dots,\alpha_\ell\in \R$.  Suppose that $(x_n)_{n=0}^\infty$ is an $(\alpha_1,\ldots, \alpha_\ell)$-orbit that takes infinitely many values.   Without loss of generality, we assume that $x_0=0$. Then by Definition~\ref{defn-1}, there exists a sequence $(\omega_n)_{n=1}^\infty$ with $\omega_n\in \{1,\ldots, \ell\}$ such that
\begin{equation}
\label{e-xn'}
x_n\equiv \sum_{i=1}^n\alpha_{\omega_i}\pmod 1,\quad n=1,2,\ldots.
\end{equation}

 Set $X=\{x_n:\; n\geq 0\}$.  Then $K=\overline{X}$, where $\overline{X}$ stands for the closure of $X$.   Below we prove parts (i) and (ii) of Theorem~\ref{thm-5} separately.

\begin{proof}[Proof of Theorem~\ref{thm-5}(i)]
Assume that $\ell=2$.  It is enough to show that either $X-X$ is  dense in $\T$, or  $\overline{X}$ has a non-empty interior.  As a direct consequence,
  $$
 2\underline{\dim}_BK= 2\underline{\dim}_B X\ge \underline{\dim}_B(X-X)=1,
 $$
where the second inequality follows from the simple fact that, if $X$ can be covered by $k$ balls $B_1,\ldots, B_k$ of radius $r$, then $X-X$ can be covered by  $B_i-B_j$ ($1\leq i,j\leq k$) and hence by
$k^2$ many balls of radius $3r$.

We first assume that  $\alpha_1,\alpha_2$ are $\Q_+$-dependent $(\mbox{mod }1)$.  Since $X$ contains infinitely points, one of $\alpha_1,\alpha_2$ must be irrational. Without loss of generality, we assume that $\alpha_2\not\in \Q$.   Then by the assumption of $\Q_+$-dependence $(\mbox{mod }1)$, one of the following two scenarios must occur: (a) $\alpha_1=\frac{p}{q}\in \Q$;    (b) $\alpha_1\not\in \Q$ and there exist  integers $p_1, p_2, q$ with $p_1, q>0$ so that $\alpha_1=-\frac{p_1}{q}\alpha_2+\frac{p_2}{q}$.

If (a) occurs, since $X$ contains infinitely points, we have $\omega_n=2$ for infinitely many $n$ and hence
$$\bigcup_{j=0}^{q-1}(X+j/q)\supset \{n\alpha_2: \; n\in \N\} \pmod 1.$$
Taking closure and applying the Baire category theorem, we see that $\overline{X}$ has a non-empty interior.

If (b) occurs, since $X$ contains infinitely points, one can check that
either
$$\bigcup_{i=0}^{p_1+q}\bigcup_{j=0}^{q-1}\left(X+\frac{i\alpha_2+j}{q}\right)\supset \{n\alpha_2: \; n\in \N\} \pmod 1,$$
or
$$\bigcup_{i=0}^{p_1+q}\bigcup_{j=0}^{q-1}\left(X+\frac{i\alpha_2+j}{q}\right)\supset \{-n\alpha_2: \; n\in \N\} \pmod 1.$$
Again by the Baire category theorem,  $\overline{X}$ has a non-empty interior.

Next assume that  $\alpha_1$ and $\alpha_2$ are $\Q_+$-independent $(\mbox{mod }1)$. Then  both of them are irrational. Below we treat the two cases separately: (c) $\alpha_2-\alpha_1\in \Q$, (d) $\alpha_2-\alpha_1\not\in \Q$.

First suppose that $\alpha_2-\alpha_1=p/q\in \Q$.  It is easy to see that for $n\geq 1$,  $$x_n\equiv n\alpha_1+p_n/q\pmod 1$$ for some $p_n\in \{0,1,\ldots, q-1\}$.
It follows that
$$\bigcup_{j=0}^{q-1}(X+j/q)\supset \{n\alpha_1: \; n\in \N\} \pmod 1,$$
and so  $\overline{X}$ has a non-empty interior.

Next we consider the case that $\alpha_2-\alpha_1\not\in \Q$.   Suppose that $X-X$ is not dense in $\T$. Then there exists $\delta>0$
 so that  $X-X$ is not $\delta$-dense
in $\T$.

Since $\alpha_2-\alpha_1\notin\Q$, there exists a positive integer $N$ such that
the set  $$\{k(\alpha_2-\alpha_1):\; k=1,\ldots, N\}\pmod 1$$ is
$\delta$-dense in~$\T$.  Write  $\tau(0)=0$ and
$$\tau(n)=\#\{1\leq i\leq n \colon\omega_i=2\} \quad\mbox{ for } n\geq 1,$$
 where $\#A$ stands for the cardinality of $A$.
We claim that
\begin{equation}\label{eq:N}
  \sup_{n,m\in \N}|\tau(n+m)-\tau(n)-\tau(m)|<N.
\end{equation}

Suppose on the contrary  that the claim is false, i.e.,
\begin{equation}
\label{e-ine}
|\tau(n+m)-\tau(n)-\tau(m)|\ge N\quad\text{for some $n, m\in \N$}.
\end{equation}
Fix such $n,m$. Define
 $$b_j=\tau(m+j)-\tau(j),\quad j=0,\dots,n.$$
  Then  $|b_{n}-b_0|\ge N$ by \eqref{e-ine}.  A direct check shows that $$b_{j+1}-b_j=\omega_{m+j+1}-\omega_{j},$$
   which implies  $|b_{j+1}-b_j|\leq 1$.
  Since $|b_{n}-b_0|\ge N$, we see that the set $\{ b_0,\ldots, b_n\}$ contains at least $N$ consecutive integers, say $t+1,\ldots, t+N$.
Observe that for each $k$,
\[x_k\equiv\bigl(k-\tau(k)\bigr)\alpha_1+\tau(k)\alpha_2
 \equiv k\alpha_1+\tau(k)(\alpha_2-\alpha_1)\pmod1.\]
Hence for $j=1,\ldots, n$,
\[\begin{split}
  x_{m+j}-x_j&\equiv m\alpha_1+(\tau(m+j)-\tau(j))(\alpha_2-\alpha_1)\\
  &\equiv m\alpha_1+b_j(\alpha_2-\alpha_1)\pmod1.
\end{split}\]
Therefore,
\[\begin{split}
  X-X&\supset\{x_{m+j}-x_j: \; j=1,\ldots n\}\\
  &\equiv\{m\alpha_1+b_j(\alpha_2-\alpha_1):\; j=1,\ldots n\}\\
 &\supset\{b'+(\alpha_2-\alpha_1),b'+2(\alpha_2-\alpha_1),
  \dots,b'+N(\alpha_2-\alpha_1)\}\pmod1,
\end{split}\]
where $b'=m\alpha_1+t(\alpha_2-\alpha_1)$. Consequently, $X-X$ is
$\delta$-dense in~$\T$, leading to a contraction. This
proves  \eqref{eq:N}.

Next we use \eqref{eq:N}  to show that  $\overline X$ has a non-empty
interior.  Indeed by \eqref{eq:N}, we have
\[\tau(n+m)+N\le(\tau(n)+N)+(\tau(m)+N)\]
and
\[N-\tau(n+m)\le(N-\tau(n))+(N-\tau(m)),\]
that is, the two sequences $(\tau(n)+N)_{n\ge1}$ and $(N-\tau(n))_{n\ge1}$ are
both subadditive. It follows that the limit $\tau=\lim_{n\to\infty}\tau(n)/n$
exists, and  moreover,
\[\tau=\inf_{n\ge1}\frac{\tau(n)+N}n, \quad\text{}\quad
 -\tau=\inf_{n\ge1}\frac{N-\tau(n)}n.\]
That means $|\tau(n)-n\tau|\leq N$ for all $n\ge1$, and so
\begin{equation}\label{eq:nN}
  \bigl|\tau(n)-[n\tau]\bigr|\le N\quad\text{for all $n\ge1$}.
\end{equation}

Set $\tau'=(1-\tau)\alpha_1+\tau\alpha_2$, and let
\[y_n=\{n\tau'\}-\{n\tau\}(\alpha_2-\alpha_1)\;\mbox{(mod 1)}\quad
\text{for $n\ge1$}.\]
Then
\begin{align*}
  y_n &\equiv n((1-\tau)\alpha_1+\tau\alpha_2)-
  \{n\tau\}(\alpha_2-\alpha_1)\\
  &\equiv n\alpha_1+[n\tau](\alpha_2-\alpha_1)\\
  &\equiv n\alpha_1+\tau(n)(\alpha_2-\alpha_1)+z_n\\
  &\equiv x_n+z_n\pmod1,
\end{align*}
where $z_n:=([n\tau]-\tau(n))(\alpha_2-\alpha_1)$. By~\eqref{eq:nN}, for all
$n\ge1$,
\[z_n\in \{k(\alpha_2-\alpha_1)\colon k\in\Z\ \text{and}\ |k|\le N\}=:Z.\]
Let $Y=\{y_n:\; n\in \N\}$; then $Y\subset X+Z\pmod1$. Since $Z$ is finite, by
Baire category theorem, $\overline X$ has a non-empty interior if so does
$\overline Y$.

It remains to show that $\overline Y$ has a non-empty interior. Since $\tau\in [0,1]$,
$\tau$ and $\tau'$ can not be rational numbers simultaneously (otherwise,
 $\alpha_1$ and $\alpha_2$ are not $\Q_+$-independent $(\mbox{mod }1)$). Therefore,
\[W:=\overline{\bigl\{(\{n\tau\},\{n\tau'\})\colon n\ge1\bigr\}}\]
is an infinite compact subgroup of $\T^2=\R^2/\Z^2$. It is either the whole group~
$\T^2$ or finitely many lines in $\T^2$ with rational slope. Notice that
\[\overline Y=\overline{\bigl\{\{n\tau'\}-\{n\tau\}(\alpha_2-\alpha_1)\pmod1
\colon n\ge1\bigr\}},\]
which can be regarded as the image of $W$ under certain projection along an
\emph{irrational} direction since $\alpha_2-\alpha_1\notin\Q$. Consequently,
$\overline Y$ has a non-empty interior and so does $\overline{X}$. This  completes the proof of Theorem \ref{thm-5}(i).
\end{proof}

Before proving Theorem \ref{thm-5}(ii), we first give two simple lemmas.

\begin{lem}\label{l:rational}
Consider the following system of linear equations in the variables
$z_1,\ldots, z_\ell$:
  \begin{equation}\label{eq:esic1}
   \sum_{i=1}^\ell a_{i, j}z_i=b_{j},\quad j=1,2, \ldots,
  \end{equation}
where $a_{i,j}, b_j\in \Q$ for all $i,j$.  Suppose that the system  has a
 real solution. Then it must have a rational
solution.
\end{lem}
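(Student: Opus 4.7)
The plan is to reduce to a finite system and then invoke the standard fact that a consistent rational linear system admits a rational solution.

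First I would reduce the (possibly infinite) system \eqref{eq:esic1} to a finite one. Each equation is encoded by the vector $(a_{1,j},\ldots,a_{\ell,j},b_j)\in\Q^{\ell+1}$. Since $\Q^{\ell+1}$ is $(\ell+1)$-dimensional over $\Q$, we can extract a finite subfamily $J_0$ of equations whose associated vectors form a $\Q$-basis of the $\Q$-span of all equation vectors $\{(a_{1,j},\ldots,a_{\ell,j},b_j):j\ge 1\}$. Every other equation in \eqref{eq:esic1} is then a $\Q$-linear combination of the equations in $J_0$, so any (real or rational) solution of the finite subsystem indexed by $J_0$ automatically solves the full system. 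In particular, the hypothesized real solution still solves the finite subsystem.

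Second I would handle the resulting finite system, which I rewrite as $Az=b$ with $A\in\Q^{|J_0|\times \ell}$ and $b\in\Q^{|J_0|}$. By hypothesis this system is consistent over $\R$, i.e., $\operatorname{rank}_{\R}(A)=\operatorname{rank}_{\R}([A\mid b])$. But the rank of a rational matrix is computable as the size of the largest nonvanishing minor, and minors of rational matrices are rational; hence $\operatorname{rank}_{\R}(A)=\operatorname{rank}_{\Q}(A)$ and similarly for $[A\mid b]$. Therefore the Rouch\'e--Capelli condition $\operatorname{rank}_{\Q}(A)=\operatorname{rank}_{\Q}([A\mid b])$ holds, and the system is consistent over $\Q$. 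A rational solution can then be produced explicitly by Gaussian elimination: performing row reduction on $[A\mid b]$ stays within $\Q$, yielding a reduced row-echelon form from which a rational solution is read off by assigning any rational values (e.g.\ zero) to the free variables and back-solving for the basic ones.

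There is no real obstacle here; the only point requiring a small observation is the reduction from infinitely many to finitely many equations, which is dispatched by the finite-dimensionality of the ambient space $\Q^{\ell+1}$. Everything else is standard linear algebra over a field, applied to $\Q\subset\R$.
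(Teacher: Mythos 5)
Your proof is correct. The paper does not actually present an argument for this lemma --- it simply remarks that the result is classical --- so there is no paper proof to compare against; your two-step reduction (extract a finite subsystem $J_0$ using the finite dimensionality of $\Q^{\ell+1}$, then apply rank invariance of rational matrices under field extension together with Rouch\'e--Capelli) is a complete and clean way to supply the missing details. The one point that deserves the emphasis you give it is that the spanning relations are among the full augmented vectors $(a_{1,j},\ldots,a_{\ell,j},b_j)$, including the constant terms, which is exactly what guarantees that a solution of the $J_0$-equations automatically satisfies every discarded equation.
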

\begin{proof}
This is a classical result in linear algebra.
\end{proof}

\begin{lem}
\label{simple} For  $A\subset {\Bbb T}$ and $\delta>0$, let $N_\delta(A)$ denote
the smallest number of intervals of length $\delta$ that are needed to cover
$A$. Then for any positive integer $p$,  we have
$$N_{p\delta}(pA(\mbox{mod }1))\leq N_\delta(A).$$
\end{lem}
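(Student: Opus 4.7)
The plan is to push an optimal $\delta$-cover of $A$ forward through the map $x\mapsto px\pmod 1$ and verify that the resulting family of arcs covers $pA\pmod 1$ by intervals of length $p\delta$. This reduces the claim to a Lipschitz-type observation about the two maps involved.

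First I would fix a minimal cover of $A$ by intervals $I_1,\dots,I_k\subset\T$ of length $\delta$, where $k=N_\delta(A)$. Each $I_j$ lifts to a closed interval $\widetilde I_j\subset\R$ of length $\delta$. Multiplication by $p$ sends $\widetilde I_j$ to an interval $p\widetilde I_j\subset\R$ of length $p\delta$. Composing with the canonical map $\pi\colon\R\to\T$, which is $1$-Lipschitz, shows that $\pi(p\widetilde I_j)$ has diameter at most $p\delta$ in $\T$; hence it is contained in a single arc of length $p\delta$ on $\T$ (or, when $p\delta\ge 1$, in the whole circle, which is itself a single $p\delta$-interval in the covering sense).

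Next I would verify that these $k$ arcs cover $pA\pmod 1$. If $x\in A$, choose $j$ with $x\in I_j$ and a lift $\widetilde x\in\widetilde I_j$; then $p\widetilde x\in p\widetilde I_j$, so $\pi(p\widetilde x)=px\pmod 1$ lies in $\pi(p\widetilde I_j)$, and thus in the associated arc of length $p\delta$. Taking the union over $j=1,\dots,k$ exhibits a cover of $pA\pmod 1$ by $k$ intervals of length $p\delta$, giving $N_{p\delta}(pA\pmod 1)\le k=N_\delta(A)$.

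I do not anticipate any real obstacle; the statement is a direct consequence of the fact that $x\mapsto px$ is $p$-Lipschitz on $\R$ and $\pi$ is $1$-Lipschitz. The only mild point to keep in mind is the possible wrap-around of $\pi(p\widetilde I_j)$ when $p\delta\ge 1$, or when a lift $\widetilde I_j$ straddles the boundary of a fundamental domain; both situations are absorbed by the single observation that any subset of $\T$ of diameter at most $p\delta$ can be covered by one interval of length $p\delta$.
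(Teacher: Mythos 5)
Your proposal is correct and follows essentially the same approach as the paper: push an optimal $\delta$-cover of $A$ forward under $x\mapsto px\pmod 1$ and observe that each image is contained in an interval of length $p\delta$. The paper states this in one line; you have simply spelled out the lifting-to-$\R$ and wrap-around details, which is fine but adds nothing new.
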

\begin{proof}
Suppose that $A$ can be covered by intervals  $I_1,\ldots, I_k$. Then
$pA(\mbox{mod }1)$ can be covered by the intervals  $pI_1(\mbox{mod }1)$,
\ldots, $pI_k(\mbox{mod } 1)$.
\end{proof}

\begin{proof}[Proof of Theorem~\ref{thm-5}(ii)]

First observe that $\dim\mbox{span}_\Q(1,\alpha_1,\ldots, \alpha_\ell)=:1+r>1$, otherwise $\alpha_1,\ldots, \alpha_\ell$ are all rationals and hence $X$ is a finite set,  which leads to a contradiction.  Therefore,  $r\geq 1$.  Pick  a suitable basis $1, \beta_1,\ldots, \beta_r$ of $\mbox{span}_\Q(1,\alpha_1,\ldots, \alpha_\ell)$ so that
\begin{equation}\label{eq:alpahi}
  \alpha_i=\sum_{j=1}^{r}p_{i,j}\beta_j+q_i, \quad i=1,\ldots, \ell,
\end{equation}
for some $p_{i,j}\in \Z$ and $q_i\in \Q$.

For $i=1,\dots,\ell$, set
$$N_i(0)=0, \mbox{  and }\;
N_i(n)=\#\{1\le j\le n\colon\omega_j=i\}\quad
\text{for $n\ge1$}.
$$
Write
\[b_{j}(n)=\sum_{i=1}^{\ell}p_{i,j}N_i(n),\quad 1\leq j\leq r,\; n\geq 0.\]
Then $b_{j}(n)\in \Z$, and moreover,
\begin{equation}
\label{e-bj}
b_j(n+1)-b_j(n)=\sum_{i=1}^\ell p_{i,j}(N_i(n+1)-N_i(n))=p_{\omega_{n+1}, j}.
\end{equation}

 Clearly, we have
 \begin{equation}
 \label{e-xn}
 \begin{split}
 x_n&\equiv\sum_{i=1}^\ell N_i(n)\alpha_i \\
 &\equiv \sum_{i=1}^\ell\left(\Big(\sum_{j=1}^r (p_{i,j}N_i(n)\beta_j\Big) + q_iN_i(n)\right)  \\
 &\equiv \sum_{j=1}^r b_j(n)\beta_j +\sum_{i=1}^\ell q_iN_i(n)\pmod 1.
 \end{split}
 \end{equation}

 As $q_i\in \Q$, the term $c_n:=\sum_{i=1}^\ell q_iN_i(n)\pmod 1$ can take only finitely many different values. However, by assumption,   $x_n$ can take infinitely many different values, thus the sequence $(b_1(n), \ldots, b_r(n))_{n\geq 0}$ of integer vectors is unbounded.
 Therefore, there
exist $r_0\in\{1,\dots,r\}$ and a strictly increasing sequence
$(n_s)_{s\ge1}$ of positive integers such that
\begin{equation}
\label{e-e29}
|b_{r_0}(n_s)|=\max_{1\le j\le r}|b_j(n_s)|\;\; \text{for all $s\ge1$},\;  \mbox{ and }
\lim_{s\to\infty}|b_{r_0}(n_s)|=\infty.
\end{equation}

Choose a positive integer $M$  so that $M>1+\sum_{j=1}^{r}|\beta_j|$.  Then  define $\beta_1^*,\dots,\beta^*_r$ by
$$
\beta^*_j=\left\{
\begin{array}{ll}
\beta_j & \mbox{ if } j\in \{1,\ldots, r\}\backslash \{r_0\},\\
\beta_{r_0}+M & \mbox{ if }j=r_0.
\end{array}
\right.
$$
Correspondingly, set $q_i^*=q_i-Mp_{i,r_0}$ for $1\leq i\leq \ell$.  Clearly $\{1,\beta_1^*,\ldots, \beta_r^*\}$ is still a basis of $\mbox{span}_\Q(1,\alpha_1,\ldots, \alpha_\ell)$ and it satisfies the following relations:
\begin{equation}\label{e-now}
  \alpha_i=\sum_{j=1}^{r}p_{i,j}\beta_j^*+q_i^*, \quad i=1,\ldots, \ell.
\end{equation}

Similar to \eqref{e-xn}, for $n\geq 0$ we have
\begin{equation}
\label{e-xn1}
x_n\equiv \sum_{j=1}^r b_j(n)\beta_j^* +\sum_{i=1}^\ell q_i^*N_i(n)\pmod 1
\end{equation}

Set
\begin{equation}
\label{e-bn}
B(n)=\sum_{j=1}^{r}b_j(n)\beta_j^*
=\sum_{j=1}^{r}\sum_{i=1}^{\ell}p_{i,j}N_i(n)\beta_j^*.
\end{equation}
Then by \eqref{e-e29}, we have
\begin{align*}
|B(n_s)|&=\biggl|\sum_{j=1}^{r}b_j(n_s)\beta_j+b_{r_0} (n_s)M
\biggr|\\
&\ge |b_{r_0}(n_s)| \cdot \left (M-\sum_{j=1}^{r}|\beta_j|\right)\\
&\ge |b_{r_0}(n_s)|.  \end{align*}
Hence,  by \eqref{e-e29} again, we see that
\begin{equation}\label{eq:inf}
  \lim_{s\to\infty}|B(n_s)|=\infty,
\end{equation}
and  the sequence
\begin{equation}\label{eq:bound}
  \left(\frac{b_1(n_s)}{B(n_s)},\dots,
  \frac{b_r(n_s)}{B(n_s)}\right)_{s\ge1}
  \quad\text{is bounded}.
\end{equation}

Now we define a new sequence $(\widetilde{x}_n)_{n\geq 0}$ of points in $\T$ so that  $\widetilde{x}_0=0$ and
\begin{equation}\label{eq:xn`}
  \widetilde{x}_n\equiv B(n)\pmod 1   \quad \mbox{ for $n\ge1$}.
\end{equation}
   By \eqref{e-xn1} and \eqref{e-bn}, we see that
 \begin{equation}
 \label{e-2.16}
 x_n-\widetilde{x}_n\equiv\sum_{i=1}^\ell q_i^*N_i(n)\pmod 1,
\end{equation}
 which can only take finitely many different values.

Next we prove a key lemma about the distribution of the sequence $(\widetilde{x}_n)$.
 \begin{lem}\label{l:kxn}
There exists $k_0\in \N$ such that
  \[
  \sup_{n\ge1}\|k\widetilde{x}_n\|\ge1/5
  \]
for all integers $k\geq k_0$, where $\|x\|=\inf\{|x-z|\colon z\in\Z\}$.
\end{lem}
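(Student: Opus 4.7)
The plan is to argue by contradiction: suppose that some positive integer $k$ satisfies $\|k\widetilde x_n\| < 1/5$ for every $n\ge 1$, and show that $k$ is bounded by an explicit constant depending only on the $\beta_j^*$'s.

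The first step converts the modular hypothesis into a uniform real bound. Since $\widetilde x_{n+1}-\widetilde x_n \equiv v_{\omega_{n+1}}\pmod 1$ where $v_i := \sum_j p_{i,j}\beta_j^*$, the triangle inequality in $\T$ forces $\|k v_i\| < 2/5$ for every $i$ appearing in $(\omega_n)$. Writing $k v_i = n_i + w_i$ with $n_i\in\Z$ the nearest integer and $|w_i|<2/5$, I would set
\[
W(n) := \sum_{i=1}^\ell N_i(n)\, w_i = k B(n) - \sum_i N_i(n)\, n_i.
\]
Then $W(n) \equiv k B(n) \equiv k\widetilde x_n \pmod 1$ lies within $1/5$ of an integer, and since $W(0) = 0$ and $|W(n+1)-W(n)| = |w_{\omega_{n+1}}| < 2/5$ is too small to bridge two distinct unit intervals of the form $m+(-1/5,1/5)$, an induction yields $|W(n)|<1/5$ for every $n\ge 0$.

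Next, I specialize to the subsequence $(n_s)$ constructed just before the lemma. Along $(n_s)$ the real sequence $W(n_s)$ is uniformly bounded by $1/5$ while $|B(n_s)|\ge|b_{r_0}(n_s)|\to\infty$, and the ratios $b_j(n_s)/b_{r_0}(n_s)$ lie in $[-1,1]$ with $b_{r_0}(n_s)/b_{r_0}(n_s)=1$. Set $I_s := \sum_i N_i(n_s) n_i\in\Z$; the bound $|W(n_s)|<1/5$ together with $W(n_s) = k B(n_s) - I_s$ identifies $I_s$ as the nearest integer to $k B(n_s)$. Passing to a further subsequence on which $b_j(n_s)/b_{r_0}(n_s)\to t_j$ (with $t_{r_0}=1$ and $|t_j|\le 1$) and dividing the identity by $b_{r_0}(n_s)$, the left-hand side tends to $0$, so $I_s/b_{r_0}(n_s)\to T := k\sum_j t_j \beta_j^*$. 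By the choice $M>1+\sum_j|\beta_j|$, a direct computation gives $|\sum_j t_j \beta_j^*|>1$, hence $|T|>k$.

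The remaining step—and the one I expect to be the main obstacle—is to convert the real limiting identity $T = k\sum_j t_j\beta_j^*$ into a genuine $\Q$-linear dependence among $1,\beta_1^*,\ldots,\beta_r^*$, which would contradict their $\Q$-independence and force an explicit upper bound on $k$. The delicate point is that the $t_j$ arise as real limits of the rational ratios $b_j(n_s)/b_{r_0}(n_s)$ and need not themselves be rational, so Lemma~\ref{l:rational} cannot be applied directly to the real equation $T=k\sum_j t_j\beta_j^*$. My plan is to translate the situation back to the integer system $b_j(n_s) = \sum_i N_i(n_s) p_{i,j}$ (which has integer coefficients $p_{i,j}$ and integer data) together with the integer identity $I_s = \sum_i N_i(n_s) n_i$, and to use Lemma~\ref{l:rational} to lift, after suitably refining the subsequence so that certain coordinate ratios stabilize, the existence of a real solution to this system with $t_{r_0}=1$ to the existence of a nontrivial \emph{rational} one. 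Substituting such a rational solution back into the relation defining $T$ and clearing denominators produces a nontrivial $\Q$-linear relation among $1,\beta_1^*,\ldots,\beta_r^*$, which is the desired contradiction; navigating this rationality-lifting cleanly is where I expect the bulk of the work.
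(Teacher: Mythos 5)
Your setup diverges genuinely from the paper's: you work with a \emph{single} violating $k$ and define per-index residuals $w_i=kv_i-n_i$ (nearest-integer form of $kv_i$, $v_i=\sum_j p_{i,j}\beta_j^*$), whereas the paper works with a whole sequence $(k_l)$ of violating integers and compares \emph{pairs} $k_l,k_m$ through the per-basis-vector fractional parts $\{k_l\beta_j^*\}$. Your first two steps are correct and clean: the induction giving $|W(n)|<1/5$ is sound (starting from $W(0)=0$, increments $<2/5$ cannot jump between the intervals $m+(-1/5,1/5)$), the identity $W(n)=kB(n)-I(n)$ with $I(n)=\sum_i N_i(n)n_i\in\Z$ is right, and the passage to the limit along $(n_s)$ giving $T=k\sum_j t_j\beta_j^*$ with $|\sum_j t_j\beta_j^*|>1$ is exactly as you say.

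The step you flag as ``the main obstacle'' is, however, a genuine gap, and I do not believe it can be closed within the single-$k$ framework. The difficulty is precisely the one you identify: the limits $t_j$ and $T$ are real, and the system you propose to hand to Lemma~\ref{l:rational}, namely $\sum_i x_i p_{i,j}=b_j(n_s)$ together with $\sum_i x_i n_i=I_s$, after normalizing by $b_{r_0}(n_s)$ becomes a system whose \emph{right-hand sides} are the irrational limits $t_j$ and $T$. Lemma~\ref{l:rational} requires rational data throughout, so it produces nothing here; and there is no way to ``refine the subsequence so that certain coordinate ratios stabilize'' to rationals, since limits of rational sequences are typically irrational. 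What makes the paper's argument close is the comparison of two violating integers: there the relevant coefficients are
\[
\frac{[k_l\beta_j^*]-[k_m\beta_j^*]}{k_l-k_m},
\]
which are \emph{rational by construction}, giving a rational linear system for $(t_1,\dots,t_r)$ to which Lemma~\ref{l:rational} genuinely applies, and which \emph{converge to $\beta_j^*$ as $k_l-k_m\to\infty$}. Your single-$k$ setup has no analogue of either feature: no rational coefficients, and no parameter tending to infinity that turns a rational relation into a relation among $1,\beta_1^*,\dots,\beta_r^*$. Relatedly, your plan would bound $k$ by ``an explicit constant depending only on the $\beta_j^*$'s,'' but nothing in the argument so far uses the size of $k$, so if it did close it would actually rule out \emph{every} $k\ge1$, which is stronger than the lemma and not what the paper proves. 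To repair the approach you would essentially need to reinstate the two-index comparison, e.g.\ look at $W_l(n)-W_m(n)=(k_l-k_m)B(n)-\sum_i N_i(n)\bigl(n_i^{(l)}-n_i^{(m)}\bigr)$; but then the obstacle becomes expressing $\sum_i N_i(n)\bigl(n_i^{(l)}-n_i^{(m)}\bigr)$ in terms of the $b_j(n)$'s, which the paper's per-$\beta_j^*$ decomposition gives for free and your per-$i$ decomposition does not.
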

\begin{proof}
We prove the lemma by  contradiction. Suppose that the lemma  is false. Then
there exists a strictly increasing sequence $(k_l)_{l\ge1}$ of positive
integers so that
\begin{equation}\label{eq:|klxn|<1/5}
  \|k_l\widetilde{x}_n\|<1/5\quad\text{for all $n,l\ge1$}.
\end{equation}

Let $\{x\}$ and $[x]$ denote the fractional part and integer part of the real
number $x$, respectively.

Since the sequence $\bigl(\sum_{j=1}^{r}p_{i,j}\{k_l\beta_j^*\}\bigr)_{l\ge1}$
is bounded for every $i\in\{1,\dots,\ell\}$, by taking a subsequence of
$(k_l)_{l\ge1}$ if necessary, we can assume that
\begin{equation}\label{eq:kb-kb<1/5}
  \biggl|\sum_{j=1}^{r}p_{i,j}\Bigl(\{k_l\beta_j^*\}
  -\{k_m\beta_j^*\}\Bigr)\biggr|<1/5\quad
  \text{for $1\le i\le\ell$ and $l,m\ge1$}.
\end{equation}

For each $l\ge1$, define $y_{l,0}=0$ and
\begin{equation}\label{eq:yln}
  y_{l,n}=\sum_{j=1}^{r}b_j(n)\{k_l\beta_j\}
  =\sum_{j=1}^{r}\sum_{i=1}^{\ell}p_{i,j}N_i(n)\{k_l\beta_j\}
  \quad\text{for $n\geq 1$}.
\end{equation}
By~\eqref{eq:xn`} and \eqref{e-bn}, we have $y_{l,n}\equiv k_l\widetilde{x}_n\pmod1$, and so
$\|y_{l,n}\|<1/5$ by~\eqref{eq:|klxn|<1/5}. We claim that
\begin{equation}\label{yln-ylm<2/5}
  |y_{l,n}-y_{m,n}|<2/5\quad\text{for all $l,m\in\N$ and $n\ge0$}.
\end{equation}

To see it, we proceed by induction on~$n$. Clearly \eqref{yln-ylm<2/5} holds
for $n=0$, since by definition $y_{l,0}=0$ for all $l\ge1$. Now suppose that
$|y_{l,n}-y_{m,n}|<2/5$ for all $l,m\in \N$ and some $n\ge0$. Since
$\|y_{l,n}\|<1/5$ and $\|y_{m,n}\|<1/5$, by \eqref{yln-ylm<2/5} there exists $z\in\Z$ such that
\begin{equation}
\label{e-yln}
y_{l,n},\; y_{m,n}\in(z-1/5,z+1/5).
\end{equation}

Observe that
 \begin{equation}
 \label{e-1/5}
 \begin{split}
& \left|(y_{l,n+1}- y_{l,n})-(y_{m,n+1}-y_{m,n})\right|\\
 &\mbox{}\quad =\left| \sum_{j=1}^r \left(b_{j}(n+1)-b_j(n) \right) \left( \{k_l\beta_j^*\}-\{k_m\beta_j^*\} \right)   \right | \quad \qquad \mbox{(by \eqref{eq:yln})}\\
 &\mbox{}\quad =\left | \sum_{j=1}^rp_{\omega_{n+1}, j} ( \{k_l\beta_j^*\}-\{k_m\beta_j^*\})   \right | \quad \qquad \mbox{(by \eqref{e-bj})}\\
  &\mbox{}\quad \leq 1/5  \quad \qquad \mbox{(by \eqref{eq:kb-kb<1/5})}.
 \end{split}
 \end{equation}
Since $\|y_{l, n+1}\|<1/5$,  we have $|y_{l,n+1}-z'|<1/5$ for some $z'\in\Z$, and so by \eqref{e-yln},
$$|y_{l,n+1}-y_{l,n}-(z'-z)|<2/5.$$
Combining the above inequality with \eqref{e-1/5} yields that $$|y_{m,n+1}-y_{m,n}-(z'-z)|<3/5.$$ Thus, by \eqref{e-yln}, $|y_{m,n+1}-z'|<4/5$. Combining this with $\|y_{m,n+1}\|<1/5$, we have
$|y_{m,n+1}-z'|<1/5$. Consequently, $|y_{l,n+1}-y_{m,n+1}|<2/5$. This
completes the proof of~\eqref{yln-ylm<2/5}.

By~\eqref{eq:yln} and~\eqref{yln-ylm<2/5},
\[
\left|\sum_{j=1}^{r}b_j(n)\left(\{k_l\beta_j^*\}
-\{k_m\beta_j^*\}\right)\right|<\frac25.\]
That is
\[
\left| (k_l-k_m)B(n)-\sum_{j=1}^{r}b_j(n)\left ( [k_l\beta_j^*]
-[k_m\beta_j^*]\right)
\right|<\frac25.
\]
 Replacing $n$ by $n_s$ and
dividing both sides by $|(k_l-k_m)B(n_s)|$  gives
\begin{equation}
\label{e-eend}
\left|
\sum_{j=1}^{r}\frac{b_j(n_s)}{B(n_s)}\cdot
\frac{[k_l\beta_j^*]-[k_m\beta_j^*]}{k_l-k_m}-1
\right|<
\frac{2}{5|(k_l-k_m)B(n_s)|}.
\end{equation}

By \eqref{eq:bound}, the sequence
\[
\left(\frac{b_1(n_s)}{B(n_s)},\dots,
\frac{b_r(n_s)}{B(n_s)}\right)_{s\ge1}\]
is bounded and hence  has an accumulation point, say $(t_1,\ldots, t_r)$.  
By~\eqref{eq:inf} and \eqref{e-eend}, we have
\[\sum_{j=1}^{r}t_j\frac{[k_l\beta_j^*]-[k_m\beta_j^*]}{k_l-k_m}=1
\quad\text{for all distinct $l,m\in\N$}.\]

Since $\frac{[k_l\beta_j^*]-[k_m\beta_j^*]}{k_l-k_m}\in\Q$, by Lemma \ref{l:rational}, there exist
$u_1,\dots,u_r\in\Q$ such that
\[\sum_{j=1}^{r}u_j\frac{[k_l\beta_j^*]-[k_m\beta_j^*]}{k_l-k_m}=1
\quad\text{for all distinct $l,m\in\N$}.\]
Finally, letting $k_l-k_m\to\infty$, we have $\sum_{j=1}^{r}u_j\beta_j^*=1$,
which contradicts the fact that $1,\beta_1^*,\dots,\beta_r^*$ are
$\Q$-independent. This completes the proof of the lemma.
\end{proof}

Let us continue the proof of  Theorem~\ref{thm-5}(ii).  We consider the cases $r=1$ and $r>1$ separately.

First assume that $r=1$. In this case, we show that $K$ has non-empty interior.  For convenience, write  $\beta={\beta_1^*}$ and  $p_i:=p_{i,1}$.
Then $\beta$ is irrational and
\begin{equation}
\label{e-x}
\alpha_i=p_i\beta +q_i^*\quad \mbox{  for } i=1,\ldots, \ell.
 \end{equation}
 Recall that $p_i\in \Z$ and $q_i^*\in \Q$. Pick $q\in \N$ such that all $q_i^*$ are the integral multiples of $1/q$. Let $p=\max_{1\leq i\leq \ell } |p_i|$.
  Since the set  $X=\{x_n:\; n\geq 1\}$ is infinite, we have $p\geq 1$ and moreover,  by the expression \eqref{e-x} of $\alpha_i$, it is not hard to see that
\begin{align*}
\mbox{ either } & \bigcup_{i=-p}^p\bigcup_{j=-q}^q \left(X+i\beta+\frac{j}{q}\right) \supset \{n\beta:\; n\in \N\} \pmod 1\\
\mbox{or }\;\;\; & \bigcup_{i=-p}^p\bigcup_{j=-q}^q \left(X+i\beta+\frac{j}{q}\right) \supset \{-n\beta:\; n\in \N\}\pmod 1.
\end{align*}
 Taking closure and applying the Baire category theorem, we see that $K=\overline{X}$ has a non-empty interior.

Next assume $r\geq 2$.
  Let $m=\max_{1\leq i\leq \ell}\sum_{j=1}^r|p_{i,j}|$. We claim that for every $n\in \N$, there
  exists  $k_n\in\{1,\dots,(mn)^r+1\}$
  such that
  \begin{equation}
  \label{eq:alphabeta}
  \|k_n\beta_j^*\|\leq \frac{1}{mn},\quad j=1,\ldots, r.
  \end{equation}
  To prove this claim, fix $n\in \N$ and partition the unit cube $[0,1]^r$
  into $(mn)^r$ sub-cubes of side length $\frac{1}{mn}$. Consider the
  following $(mn)^r+1$ vectors
  $$v_k=(k\beta_1^*,\dots,k\beta_r^*)\pmod1, \quad k=1,\ldots, (mn)^r+1.
  $$
  By the pigeonhole principle,  two of them, say $v_k$ and $v_{k'}$,  are
  contained in the same  subcube, and  thus $v_k-v_{k'}\in [-\frac{1}{mn},
  \frac{1}{mn}]^r$. Then we have $\|(k'-k)\beta_j^*\|\leq \frac{1}{mn}$ for all
  $j\in\{1,\dots,r\}$. The claim is proved by taking $k_n=|k'-k|$.

  Pick $q\in \N$ such that all $q_i^*$ are the integral multiples of $1/q$. By~\eqref{e-now} and \eqref{eq:alphabeta}, we have
\begin{equation}
  \label{e-kq}
   \|k_nq\alpha_i\|\le\sum_{j=1}^r(q |p_{i,j}|\cdot\|k_n\beta_j^*\|)
   \leq qm\cdot \frac{1}{mn}=\frac{q}{n},
   \quad  i=1,\dots,\ell, \; n\geq 1.
\end{equation}
  Define $y_{n,s}\in \T$ so that
  \begin{equation}
  \label{e-ys}
  y_{n, s}\equiv k_nqx_s \pmod 1,\quad n\geq 1,\; s=0,1,\ldots,
  \end{equation}
   and let $Y_n=\{y_{n,s}:\;
  s=0,1,\ldots\}\subset \T$. By \eqref{e-kq} and the definition of $x_s$, we have
  $\|y_{n,s+1}-y_{n,s}\|\leq q/n$ for each $s\geq 0$.  It follows that
  $$
  I_n:=\bigcup_{s\geq 0} \left[y_{n,s}-\frac{q}{2n}, \; y_{n,s}+\frac{q}{2n}\right] \pmod1
  $$
  is an interval in $\T$ containing $y_{n,0}=0$.

  By \eqref{e-2.16}, we have $qx_n=q\widetilde{x}_n\pmod 1$ for each $n\geq 1$.  Therefore, by Lemma~\ref{l:kxn}, there exists $k_0>0$ such that
    $$a:=\inf_{k\geq k_0} \sup_{s\geq 0}\|kqx_s\|=\inf_{k\geq k_0} \sup_{s\geq 0}\|kq\widetilde{x}_s\|\geq \frac{1}{5}.$$
  Hence   by  \eqref{e-ys},  for any  $n$ so that $k_n>k_0$, we have $\sup_{s\geq 0} \|y_{n,s}\|\geq a>0$, and hence   the length of $I_n$ is
  not less than $a$. It follows that
  $$
  N_{q/n}(Y_n)\geq{an/q},
  $$
  where $N_\delta(A)$ stands for the smallest number of intervals of length
  $\delta$ that are needed to cover $A$.  Since $Y_n=k_nqX\;(\mbox{mod
  }1)$, by Lemma \ref{simple}, we have $$N_{1/(nk_n)}(X)\geq
  N_{q/n}(Y_n)\geq {an/q}.$$ Since $k_n\leq
  (mn)^r+1$, we have
  $$
  N_{1/(2m^r n^{r+1})}(X)\geq N_{q/n}(X)\geq{an/q}.
  $$
  Noticing that the above inequality holds for all $n\in \N$ and $m,q,r$ are
  constant, we have
  \[\lbdim X\ge\liminf_{n\to\infty}\frac{\log({an/q})}{\log (2m^r n^{r+1})}=
  \frac1{r+1}.\]
  Thus we have $\lbdim K= \lbdim X\geq 1/(r+1)$.
\end{proof}

\section{The proof of Theorem~\ref{thm-2}}\label{sec:proof}

 We begin with a lemma about orthogonal groups. Let $\mathcal O(d)$ be    the group of $d\times d$ orthogonal matrices operated by  matrix multiplication.
\begin{lem}
\label{lem-orth}
  For every  $P\in\mathcal O(d)$, there exists  $k\in\N$ such that the closure
  of $\{P^{kj}\colon j\ge0\}$ in $\mathcal O(d)$ is a connected subgroup of $\mathcal O(d)$.
\end{lem}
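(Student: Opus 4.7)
The plan is to exploit the structure theory of compact abelian Lie groups. Let $G$ denote the closure of $\{P^j : j \in \Z\}$ in $\mathcal{O}(d)$. A short compactness argument---extracting a subsequence $P^{n_l} \to I$ with $n_l \to \infty$ and noting $P^{n_l - 1} \to P^{-1}$---shows this closure coincides with the closure of $\{P^j : j \geq 0\}$. Being a closed abelian subgroup of the compact Lie group $\mathcal{O}(d)$, the set $G$ is itself a compact abelian Lie group (Cartan's closed subgroup theorem); hence its identity component $G^0$ is a torus, and the quotient $G/G^0$ is finite. Since $G$ is topologically generated by $P$ and $G/G^0$ is discrete, the quotient is cyclic, generated by the coset $PG^0$. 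I would take $k$ to be the order of this coset, so that $P^k \in G^0$ while $P^i \notin G^0$ for $1 \leq i \leq k-1$.

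Next I would show that $H := \overline{\{P^{kj} : j \geq 0\}}$ equals $G^0$; since the torus $G^0$ is connected, this finishes the proof. Applying the same compactness argument to $P^k$ in place of $P$ gives $H = \overline{\{P^{kj} : j \in \Z\}}$, which is a closed subgroup of $G^0$. For the reverse inclusion, write each integer $n$ as $n = qk + r$ with $0 \leq r < k$: then $P^n = P^r P^{qk} \in P^r H$, so $\{P^n : n \in \Z\} \subset \bigcup_{i=0}^{k-1} P^i H$. The right-hand side is a finite union of closed sets, hence closed, so taking closures yields $G \subset \bigcup_{i=0}^{k-1} P^i H$.

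Intersecting with $G^0$ completes the argument. Any element of $P^i H \cap G^0$ has the form $P^i h$ with $h \in H \subset G^0$, forcing $P^i \in G^0$; by the choice of $k$, this forces $i = 0$. Hence $G^0 \subset H$, and combined with $H \subset G^0$ we obtain $H = G^0$, a connected subgroup of $\mathcal{O}(d)$, as required.

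The main obstacle I anticipate is not the algebra but the topology: ensuring that the closure of the one-sided semigroup $\{P^{kj} : j \geq 0\}$ actually agrees with the closure of the full cyclic group $\{P^{kj} : j \in \Z\}$. This is exactly where compactness of $\mathcal{O}(d)$ is used, since it forces $P^{-k}$ to appear as a limit of sufficiently high positive powers of $P^k$; without this observation, $H$ would only be a sub-semigroup rather than a subgroup and the coset decomposition step would break down.
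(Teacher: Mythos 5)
Your proof is correct and follows essentially the same route as the paper: apply Cartan's closed subgroup theorem to realize the closure as a compact abelian Lie group, use that the identity component is open so the quotient is finite, take $k$ to be the smallest positive exponent landing $P^k$ in the identity component, and then show the closure of $\{P^{kj}\colon j\ge 0\}$ equals the identity component. The paper closes the last step by noting the identity component is open and disjoint from $\{P^j\colon k\nmid j\}$, hence from its closure, while you use the coset decomposition $G\subset\bigcup_{i=0}^{k-1}P^iH$ and intersect with $G^0$; these are equivalent ways of finishing, so no meaningful difference.
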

\begin{proof}
This result might be well known, however we are not able to find a
reference, so a proof is included for the reader's convenience.

Let  $P\in \mathcal O(d)$, and let  $W$ be the closure of $\{P^j\colon j\ge 0\}$ in  $\mathcal O(d)$. It is not hard to see that $W$ is a
compact Abelian subgroup of~$\mathcal O(d)$. Hence by the Cartan theorem
(cf.~\cite[Theorem~3.3.1]{Price77}), $W$ is also a Lie group. Let $W_0$ be
the connected component of $W$ containing the unit element $I$. Then $W_0$ is
a closed normal subgroup of $W$, and it is also open in $W$
(cf.~\cite[Lemma~2.1.4]{Price77}). By the finite covering theorem, $W$ has
only finitely many connected branches. It follows that  the quotient group $W/W_0$
is finite.

Let $\Z_0=\{j\in\Z\colon P^j\in W_0\}$.  Then $\Z_0$  is a subgroup of $\Z$. Since $W/W_0$ is finite, there are
distinct $j_1,j_2\in\Z$ such that $P^{j_1}$ and $P^{j_2}$ both belong to a
  coset of~$W_0$. Hence $P^{j_2-j_1}\in W_0$, and  consequently, $\Z_0$   contains a
nonzero element $j_2-j_1$. Therefore, $\Z_0= k\Z$ for some
$k\ge1$. We claim that $W_0$ is the closure of $\{P^{kj}\colon j\ge0\}$,  from which
 the lemma follows since $W_0$ is connected.

Clearly  $W_0$ contains the closure of $\{P^{kj}\colon j\ge0\}$.
Conversely, since $W_0$ is open and disjoint from $\{P^j\colon k\nmid j\}$,
it is also disjoint from the closure of $\{P^j\colon k\nmid j\}$. Thus, $W_0$
is contained  in the closure of $\{P^{kj}\colon j\ge0\}$.  This completes the proof of the lemma.
\end{proof}

\begin{proof}[Proof of Theorem~\ref{thm-2}]

For brevity, we write $\phi_I=\phi_{i_1}\circ\dots\circ\phi_{i_n}$ and
$\rho_I=\rho_{i_1}\dotsb\rho_{i_n}$ for $I=i_1\dots
i_n\in\{1,\dots,\ell\}^n$. Similarly, we also use the abbreviations $\psi_J$
and $\gamma_J$ for $J\in\{1,\dots,m\}^n$.

 Since $F$ can be affinely embedded into $E$, there exist an invertible real $d\times d$ matrix $M$ and $b\in {\Bbb R}^d$ such that
\begin{equation}\label{eq:MFsubsetE}
  M(F)+b\subset E.
\end{equation}

 Without loss of generality, we only prove that the conclusion of
Theorem~\ref{thm-2} holds for $j=1$, that is, there exist non-negative
rational numbers $t_{1,i}$, $i=1,\ldots, \ell$, such that
$$\gamma_1=\prod_{i=1}^\ell \rho_i^{t_{1,i}}.$$
This is equivalent to show that $\alpha_1,\dots,\alpha_\ell$ are not
$\Q_+$-independent   $(\mbox{mod }1)$, where $$\alpha_i:=-\frac{\log
\rho_i}{\log \gamma_1}\quad\text{for $i\in\{1,\dots,\ell\}$}.$$

 Let $P_1$ be the orthogonal part of $\psi_1$.  By Lemma \ref{lem-orth}, there exists $l\in \N$ such that the closure of $\{P_1^{lj}: j\geq 0\}$ in $\mathcal O(d)$ is a connected subgroup of $\mathcal O(d)$.
In what follows,  replacing $\psi_1$ by $\psi_1^l$ if necessary, we may always assume that  the closure  $\{P_1^{j}: j\geq 0\}$ in $\mathcal O(d)$ is connected.

Let $x$ be the fixed point of $\psi_1$. Then $x\in\psi_1^n(F)$ for any integer $n\geq 0$.   By \eqref{eq:MFsubsetE}, we have $$y:=M(x)+b\in E,$$ and thus  there exists a symbolic coding
$i_1i_2\cdots\in\{1,\dots,\ell\}^\N$ such that
\begin{equation}\label{eq:xcoding}
   y=\lim_{n\to\infty}\phi_{i_1\dots i_n}(0).
\end{equation}
 Clearly $y\in \phi_{i_1\ldots i_n}(E)$ for each $n\geq 0$, which implies that
\begin{equation}
\label{e-Mpsi}
 (M(\psi_1^k(F))+b)\cap \phi_{i_1\ldots i_n}(E)\neq \emptyset\quad \mbox{ for any } k,n\geq 0.
 \end{equation}

 Since $\Phi$ satisfies the strong separation condition, we have
\begin{equation}
\label{e-delta}
\delta:=\min_{i\ne j}\dist\bigl(\phi_i(E),\phi_j(E)\bigr)>0.
\end{equation}
Moreover, for each $n\in \N$,  we have
\begin{equation}
\label{e-gap}
\dist\bigl(\phi_{i_1\dots i_n}(E),\; E\setminus\phi_{i_1\dots i_n}(E)\bigr)\geq \rho_{i_1\dots i_{n-1}}\delta>0.
\end{equation}

 For $k, n\geq 0$,  by \eqref{e-Mpsi} and \eqref{e-gap} we have
\begin{equation}
\label{e-3.5}
M(\psi_1^k(F))+b \subset \phi_{i_1\dots i_n}(E)
\quad \mbox{
if } \quad\mbox{diam}((M(\psi_1^k(F))<\rho_{i_1\dots i_{n-1}}\delta.
\end{equation}

  Now for $n \geq 1$, define
\begin{equation}\label{eq:sn}
  s_n=\min\bigl\{k\ge0\colon M(\psi_1^k(F))+b\subset
  \phi_{i_1\dots i_n}(E)\bigr\}.
\end{equation}
 Then by \eqref{e-3.5}, $s_n<\infty$.
Write
\begin{equation}\label{eq:Mratio}
\begin{split}
  \|M\|&=\max\{|Mv|\colon\text{$v\in\R^d$ with $|v|=1$}\},\\
     \sm M\sm &=\min\{|Mv|\colon\text{$v\in\R^d$ with $|v|=1$}\},
\end{split}
\end{equation}
 where $|\cdot|$ denotes the standard Euclidean norm.

By \eqref{eq:sn}-\eqref{eq:Mratio}, we have
\[\sm M\sm \gamma_1^{s_n}\diam F \le \diam M(\psi_1^{s_n}(F)) \le
\diam\phi_{i_1\dots i_n}(E)=\rho_{i_1\dots i_n}\diam E.\]
Thus, we have
\begin{equation}\label{eq:EFub}
  \frac{\gamma_1^{s_n}}{\rho_{i_1\dots i_n}}\le
  \frac{\diam E}{\sm M \sm \diam F}
  \quad\text{for all $n\ge1$}.
\end{equation}
For the lower bound, we claim that
\begin{equation}\label{eq:EFlb}
  \frac{\gamma_1^{s_n}}{\rho_{i_1\dots i_n}}
  \ge\frac{\gamma_1\delta}{\rho^*\|M\|\diam F}\quad\text{if $s_n\ge1$},
\end{equation}
where  $\delta$ is defined as in \eqref{e-delta} and
$\rho^*:=\max_{1\le i\le\ell}\rho_i$.  Indeed, suppose that
 \eqref{eq:EFlb} fails  for some $n$ with $s_n\geq 1$. Then
\begin{align*}
  \diam M(\psi_1^{s_n-1}(F))&\le\|M\| \gamma_1^{s_n-1}\diam F
  <(\rho^*)^{-1}\rho_{i_1\dots i_n}\delta\le\rho_{i_1\dots i_{n-1}}\delta.
\end{align*}
 By \eqref{e-3.5}, $M(\psi_1^{s_n-1}(F))+b\subset\phi_{i_1\dots i_n}(E)$,
which contradicts the definition of~$s_n$. This completes the proof of \eqref{eq:EFlb}.

For $1\le i\le\ell$, let $O_i$  be the orthogonal
 part of $\phi_i$. From
$M(\psi_1^{s_n}(F))+b\subset\phi_{i_1\dots i_n}(E)$  we have
$$
(\phi_{i_1\cdots i_n})^{-1}(M(\psi_1^{s_n}(F))+b) \subset E.$$
Hence
\[\rho_{i_1\dots i_n}^{-1}\gamma_1^{s_n}Q_n(F)+b_n\subset E\]
for some $b_n\in \R^d$, where $Q_n=(O_{i_1}\circ\cdots\circ O_{i_n})^{-1}MP_1^{s_n}$.
 Taking  algebraic difference,
we have
\begin{equation}
\label{e-incl}
\rho_{i_1\dots i_n}^{-1}\gamma_1^{s_n}Q_n(F-F)\subset E-E,\quad n\geq 1.
\end{equation}

 Fix a nonzero vector $v\in F-F$.  For any integer $k\geq 0$, we have $$\gamma_1^k P_1^k v\in \psi_1^k(F)-\psi_1^k(F)\subset F-F.$$

   Hence by \eqref{e-incl},
\begin{equation*}
\label{e-cont}
\rho_{i_1\dots i_n}^{-1}\gamma_1^{s_n+k}Q_n(P_1^kv)\in
 E-E,\quad \forall\; n\geq 1, \; k\geq 0.
 \end{equation*}
Taking norm on both sides yields
\begin{equation}
\label{e-norm}
\rho_{i_1\dots i_n}^{-1}\gamma_1^{s_n+k}|MP_1^{s_n+k}v|\in \{|x_1-x_2|\colon x_1,x_2\in E\}, \quad \forall \; n\geq 1,\; k\geq 0.
\end{equation}

 Next we continue our arguments according to whether the sequence $\left(|MP_1^jv|\right)_{j=0}^\infty$ is
constant.

{\textit{Case~(i)}}:  the sequence $(|MP_1^jv|)_{j=0}^\infty$ is constant.

In this case, applying \eqref{e-norm} with $k=0$ we obtain
\[U:=\{|x_1-x_2|\colon x_1,x_2\in E\}\supset
V:=\bigl\{\rho_{i_1\dots i_n}^{-1}\gamma_1^{s_n}a\colon
 n\ge1\bigr\},\]
 where $a$ is the positive constant $|MP_1^jv|$.
Set  $b_*=\inf V$ and $b^*=\sup V$. By  \eqref{eq:EFub}-\eqref{eq:EFlb},  $0<b_*<b^*<\infty$.

 Define $f:\;[b_*, b^*]\to \T$ by  $f(t)=\log t/\log\gamma_1\pmod1$. Notice that  $[b_*,b^*]$ can be
written as the union of finitely many disjoint subintervals of the form
$[b_*,b^*]\cap[\gamma_1^{n+1},\gamma_1^n)$ with $n\in\Z$, and  restricted  on each
non-empty interval $[b_*,b^*]\cap[\gamma_1^{n+1},\gamma_1^n)$, $f$ is Lipschitz. Hence
we have
\begin{equation}\label{eq:dimension}
  \ubdim f(V)\le\ubdim V\le\ubdim U\le\ubdim(E-E)
  \le\ubdim E\times E=2\hdim E.
\end{equation}
where $\ubdim$ stands for upper box-counting dimension
(cf.~\cite{Fal-book}). Recall that $\alpha_i=-\log\rho_i/\log\gamma_1$ for
$1\le i\le\ell$. Clearly,
\begin{equation}
\label{e-e1}
\dim \mbox{span}_\Q(\alpha_1,\ldots, \alpha_\ell)=
\dim \mbox{span}_\Q(\log \rho_1,\ldots, \log \rho_\ell)=:\lambda.
\end{equation}

Let $\omega=i_1i_2\dotsc\in\{1,\dots,\ell\}^\N$, where $i_1i_2\dotsc$ is the
symbolic coding of~$y$ (see~\eqref{eq:xcoding}). Define a sequence  $(x_n(\omega))_{n=1}^\infty \subset \T$ so that
$$
x_n(\omega)\equiv \sum_{k=1}^n\alpha_{i_k}\pmod1 \;\;\mbox{ for } n\geq 1. $$
Set $X(\omega)=\{x_n(\omega)\colon n\in\N\}$.
 Then we have
\[f(V)\supset X(\omega)+\frac{\log a}{\log\gamma_1}\pmod1.\]
 Combining this with \eqref{eq:dimension} yields
\begin{equation}\label{eq:EXdim}
  \hdim E\ge  (1/2)\ubdim X(\omega).
\end{equation}

 Now suppose on the contrary that  $\alpha_1,\ldots,\alpha_\ell$ are
$\Q_+$-independent $(\mbox{mod }1)$. Notice that $\overline{X(\omega)}$ is
an $(\alpha_1,\cdots,\alpha_\ell)$-set. By Corollary~\ref{cor-1}, we have
$$\lbdim \overline{X(\omega)} \ge \left\{
\begin{array}
{ll}
1/2, &\mbox{ if }\ell=2,\\
1, &\mbox{ if }\ell\ge3,\ r=1,\\
1/(r+1), &\mbox{ if }\ell\ge3,\ r>1,\\
\end{array}
\right.
$$
where $r=\dim\mbox{span}_\Q(1,\alpha_1,\ldots, \alpha_\ell)-1$.
By~\eqref{e-e1}, $\lambda=\dim\mbox{span}_\Q(\alpha_1,\ldots,
\alpha_\ell)\ge r$.

 Hence
by~\eqref{eq:EXdim}, we have
\[\hdim E\ge\frac12 \ubdim  X(\omega)=\frac12 \ubdim \overline{X(\omega)} \ge
\begin{cases}
  1/4, & \mbox{if } \ell=2, \\
  1/2, & \mbox{if } \ell\ge3,\ \lambda=1, \\
  1/(2\lambda+2), & \mbox{if }\ell\ge3,\ \lambda>1.
\end{cases}\]
Therefore, $\hdim E\ge c$, where $c$ is given as in \eqref{e-1.1}. It
contradicts the assumption that $\hdim E<c$. This completes the proof of
Theorem~\ref{thm-2} in Case~(i).

{\textit{Case~(ii)}}: the sequence  $(|MP_1^jv|)_{j=0}^\infty$ is not  constant.

  For any integer $p\geq s_1$, let $n=n_p$ be the largest integer  so that $s_n\leq p$, and
define
\begin{equation}
\label{e-u1u2}
u_{1,p}=\rho_{i_1\dots i_n}^{-1}\gamma_1^{p}Q_nP_1^{p-s_n}v,\qquad u_{2,p}=\rho_{i_1\dots i_n}^{-1}\gamma_1^{p+1}Q_nP_1^{p+1-s_n}v;
\end{equation}
taking $k=p-s_n$ and $p-s_n+1$ in \eqref{e-norm} respectively, we have
\begin{equation}
\label{e-u1p}
u_{1,p}, \; u_{2, p}\in E-E.
\end{equation}
By \eqref{e-u1u2}, we have
\begin{equation}\label{eq:pgv}
 \frac{|u_{2,p}|}{\gamma_1|u_{1,p}|}= \frac{|MP_1^{p+1}v|}{|MP_1^pv|}\quad \mbox{ for all }p\geq s_1.
\end{equation}
Furthermore, by \eqref{eq:EFub}-\eqref{eq:EFlb}, there exist two positive constants $c_1, c_2$ so that
\begin{equation}
\label{e-bound}
|u_{1,p}|, \; |u_{2, p}|\in [c_1, c_2]\quad \mbox{ for all }p\geq s_1.
\end{equation}

Now let  $W$ denote the closure  of $\{P_1^p\colon p\ge 0\}$ in $\mathcal O(d)$. As we have assumed,   $W$ is a connected  subgroup of  $\mathcal O(d)$.

 Write
 $$
 U^*=\{|x_1-x_2|\colon x_1,x_2\in E\}\cap [c_1, c_2].
 $$
 Define
\[\pi_1\colon U^*\times U^*\to\R,\
(u_1,u_2)\mapsto\frac{u_2}{\gamma_1u_1}\]
and
\[\pi_2\colon W\to\R,\ g\mapsto\frac{|MP_1gv|}{|Mgv|}.\]
It is clear that $U^*$ is a compact subset of $[c_1, c_2]$ with $c_1>0$,  thus $\pi_1$ is  Lipschitz and $\pi_1(U^*\times U^*)$ is compact.  Moreover, $\pi_2$ is continuous.
By~\eqref{e-u1p}-\eqref{e-bound} and noting that $W$ is also the closure of $\{P^p_1:\; p\geq s_1\}$,  we have
\begin{equation}\label{eq:pi2Wpi1U}
  \pi_2(W)\subset\pi_1(U^*\times U^*).
\end{equation}

 We claim that $\pi_2$ is not a constant function.
Otherwise, suppose that $$\frac{{|MP_1gv|}}{{|Mgv|}}=a$$ for all $g\in W$. We have
$a\ne1$ since the sequence $(|MP_1^pv|)_{p=0}^\infty$ is not  constant. If $a<1$, then
$|MP_1^pv|\to0$ as $p\to\infty$, and so $|Mgv|=0$ for some $g\in W$. This is
impossible since $M$ is invertible. If $a>1$, then $|MP_1^pv|\to\infty$ as
$p\to\infty$. This is also impossible since $|P_1^pv|=|v|$ for all $p\ge0$.

 Due to the above claim and the connectedness of  $W$,
the set $\pi_2(W)$ is connected and contains at least two different elements, hence it
 is a non-degenerate  interval. Therefore by \eqref{eq:pi2Wpi1U},
\[4\hdim E\ge\hdim U^*\times U^*\ge\hdim\pi_1(U^*\times U^*)\ge
\hdim\pi_2(W)=1.\]
Thus, $\hdim E\ge1/4\ge c$,  a contradiction again. Therefore Case (ii) can not occur.   This completes the proof of Theorem~\ref{thm-2}.
\end{proof}

\section{Final  questions}
\label{sec:4}
Here we pose several questions about Theorem \ref{thm-5}:

\begin{itemize}
\item[(Q1)] The lower bounds given in Theorem \ref{thm-5}   on the lower box-counting dimension of  $(\alpha_1,\ldots,\alpha_\ell)$-orbits might not be sharp. Are there any better or optimal bounds? How about the packing dimension of the closure of these sets? \footnote{In Theorem \ref{thm-5}(i),  since $\dim_H(K-K)=1$,  by \cite[Theorem 3]{Tri82} we have $\dim_PK\geq 1/2$.}
\item[(Q2)] It is easy to see that  Theorem \ref{thm-5} can be extended to  high dimensional tori. Is it possible to extend the result to  general compact Lie groups?
\end{itemize}

\subsection*{Acknowledgements}

The authors are grateful to Wen Huang for  helpful discussions and to Pablo
Shmerkin for pointing out the reference~\cite{Kat79}.
The first author was partially supported by the HKRGC GRF grants (projects CUHK401013, CUHK14302415). The second author was partially supported by NSFC (11471124 and 11371148).


\end{document}